\newcommand{\prob}[2][]{\text{\bf P}\ifthenelse{\not\equal{}{#1}}{_{#1}}{}\!\left(#2\right)}
\newcommand{\expect}[2][]{\text{\bf E}\ifthenelse{\not\equal{}{#1}}{_{#1}}{}\!\left[#2\right]}
\newcommand{\var}[2][]{\text{\bf Var}\ifthenelse{\not\equal{}{#1}}{_{#1}}{}\!\left[#2\right]}
\newcommand{\given}{\, : \,}
\newcommand{\sB}{\mathcal{B}}
\newcommand{\sE}{\mathcal{E}}      
\newcommand{\sG}{\mathcal{G}}      
\newcommand{\sI}{\mathcal{I}}
\newcommand{\sN}{\mathcal{N}}
\newcommand{\sT}{\mathcal{T}}
\newcommand{\R}{\mathbb{R}}                     
\newtheorem{theorem}{Theorem}
\newtheorem{remark}[theorem]{Remark}
\newtheorem{lemma}[theorem]{Lemma}
\newtheorem{proposition}[theorem]{Proposition}
\newtheorem{definition}[theorem]{Definition}
\newtheorem{assumption}[theorem]{Assumption}
\newcommand{\RR}{\mathbb{R}}
\newcommand{\norm}[1]{\left\| #1 \right\|}
\begin{document}

\title{Dynamic Line Ratings in AC Optimal Power Flow: Transient Temperature, Decomposition, 
and Large-scale Evaluation}

\author{Baptiste Rabecq,~\IEEEmembership{Student Member,~IEEE}, Thomas Lee,~\IEEEmembership{Student Member,~IEEE}, Andy Sun,~\IEEEmembership{Senior Member,~IEEE.}
        \thanks{Baptiste Rabecq is with the Operations Research Center, MIT}
        \thanks{Thomas Lee is with the Institute for Data, Systems, and Society, MIT}
        \thanks{Andy Sun is with the Sloan School of Management, MIT}
}



\maketitle
\begin{abstract}
As power grids experience increasing renewable penetration and rapid load growth from AI data centers and electrification, alleviating line congestion becomes critical to unlocking additional grid capacity. This work investigates Dynamic Line Rating (DLR), a congestion mitigation method that adjusts power line current limits in response to meteorological conditions. Unlike traditional approaches that impose predefined time-varying limits, we propose a novel optimization framework that embeds the transient-state heat equation governing conductor temperature dynamics, enabling direct constraints on conductor temperature rather than simplified steady-state approximations. 
We derive a closed-form solution to the heat equation, enabling a finite-dimensional reformulation of the dynamics. We then leverage a distributed decomposition method, a bi-level Alternating Direction Method of Multipliers (ADMM) algorithm with provable convergence, aided by regularity properties of the heat equation solution. These modeling and algorithmic innovations allow us to conduct the first large-scale evaluation of DLR using multi-period AC optimal power flow. Numerical experiments on the 2000-bus Texas grid demonstrate that DLR allows significant reduction in generation cost in congested systems over Static Line Rating (SLR) and Ambient Adjusted Ratings (AAR). The transient temperature formulation provides additional grid flexibility and headroom benefits with minimal computational overhead.
\end{abstract}

\begin{IEEEkeywords}
Dynamic Line Ratings, ACOPF, heat equation, decomposition.
\end{IEEEkeywords}
\section{Introduction}
\begin{table}[t]
\setlength{\tabcolsep}{2pt}
\caption{Notation table for sets, variables, and parameters}
\label{tab:notations}
\centering
\begin{tabular}{|c|p{0.56\columnwidth}|}
\hline
\textbf{Notation} & \textbf{Description} \\
\hline
\multicolumn{2}{|c|}{\textbf{Sets}} \\
\hline
$\sI, \sN_i$ & Set of $n$ buses, neighbor buses of bus $i$ \\
$\sB$, $\sE \subseteq \sB$ & Set of branches, lines. \\
$\sG$, $\sG_i$ & Set of generators, generators at bus $i$ \\
$\sT$, $\sT'$ & Decision times: $[M]{:=}\{1,\dots,M\}$, $\sT\!\setminus\!\{1\}$ \\
\hline
\multicolumn{2}{|c|}{\textbf{Optimization Variables}} \\
\hline
$p^G_{g,t}$, $q^G_{g,t}$ & Real/complex power from gen. $g$ at time $t$ \\
$p_{i,t}$, $q_{i,t}$ & Net real/complex power at bus $i$ at time $t$ \\
$\iota_{ij,t}, I_{ij,t}^{\mathrm{re}}, I_{ij,t}^{\mathrm{im}}$ & Squared magnitude, real and imaginary current on line $ij$ at time $t$ \\
$e_{i,t}$, $f_{i,t}$ & Real/complex voltage at bus $i$, time $t$ \\
$T_{ij}(\tau)$, $T_{ij,t}$ & Continuous/discrete temperature of line $ij$ \\
\hline
\multicolumn{2}{|c|}{\textbf{Electrical Parameters}} \\
\hline
$B_{ij}, G_{ij}$ & Line $ij$ susceptance, conductance \\
$B_i, G_i$ & Self-susceptance/conductance at bus $i$ \\
$Y_{ij}, Y_i$ & (Shunt) admittance of line $ij$, bus $i$ \\
$C_g(\cdot)$ & Generation cost function of generator $g$\\
$V^{\min{/}\max}_i$ & Voltage limits at bus $i$ \\
$P^{\min{/}\max}_g,Q^{\min/\max}_g$ & Power limits for generator $g$ \\
$P^D_{i,t}$, $Q^D_{i,t}$ & Power demand at bus $i$ at time $t$ \\
$\theta^{\min}_{ij}$, $\theta^{\max}_{ij}$ & Angle diff. limit on line $ij$ \\
$P^{Rmp,\pm}_g$ & Generator $g$ ramp rates (up/down) \\
\hline
\multicolumn{2}{|c|}{\textbf{Physical Parameters}} \\
\hline
$T^{\max}$, $T_{0,ij}$ & Max, initial line temperature \\
$\Delta$ & Dispatch interval \\
$r_{ij}$,$m_{ij}$,$L_{ij},c_{p, ij}, D_{0, ij},\gamma_{ij}$ & Line $ij$ resistivity, mass, length, specific heat capacity, diameter, emissivity \\
$T_{a,ij}$, $v_{w,ij}$, $\phi_{ij}$ & Atmospheric temp, wind speed, wind angle \\
\hline
\end{tabular}
\vspace{-2.5em}
\end{table}
\IEEEPARstart{T}he increasing demand for efficient and reliable power transmission has highlighted the limitations of Static Line Ratings (SLR), which impose conservative thermal limits on transmission lines based on worst-case weather assumptions. This leads to underutilization of network capacity by increasing congestion costs and lowering system efficiency. Dynamic Line Ratings (DLR) leverage real-time weather data (ambient temperature, wind speed, and solar radiation) to adjust line ratings dynamically, increasing effective line ampacity. Recognizing its potential, the Federal Energy Regulatory Commission (FERC) has advocated for broader DLR adoption to enhance grid flexibility and reduce operational costs \cite{noauthor_e-1-rm20-16-000_nodate, noauthor_ferc_nodate}. 
However, deploying real-world DLR solutions presents two major challenges. First, it requires high-resolution weather data and/or temperature monitoring technologies. Second, it adds computational complexity to the Optimal Power Flow (OPF) problems used for electricity dispatch. In this work, we assume full access to this data and focus on performance assessment. 

In Optimal Power Flow (OPF) problems, operators minimize generation costs while satisfying system constraints, e.g., voltage levels, angle limits, and transmission capacities. The most accurate formulation, Alternating Current OPF (ACOPF) \cite{carpentier_contribution_1962}, captures nonlinear nonconvex power flow physics but is strongly NP-hard \cite{bienstock_strong_2019}. To reduce complexity, the industry often uses the linear Direct Current (DCOPF) approximation, although its accuracy degrades under heavily loaded conditions.

As a result, most DLR studies to date have relied on DCOPF approximations, demonstrating benefits in capacity expansion \cite{jabarnejad_optimal_2016} and operational dispatch \cite{verma_real_2020} in small to medium-scale systems.
Recent work has extended DLR to contingency-constrained DCOPF \cite{lyu_impacts_2023} and 39-bus ACOPF \cite{su_dynamic_2024}, while \cite{lee_impacts_2022} applied DCOPF-based DLR to a 2000-bus ERCOT model. Comprehensive reviews of DLR advancements can be found in \cite{erdinc_comprehensive_2020, fernandez_review_2016}. A comparison of existing contributions is provided in Table \ref{tab:dlr_lit_review}.
A common limitation across prior work is the reliance on steady-state thermal ratings, which assume line temperatures stabilize over 20–60 minutes \cite{ieee2012ieee}. This assumption breaks down under modern $5$-minute dispatch intervals, where transient thermal dynamics can significantly impact line limits. We aim to go past this limitation to include real-time temperature dynamics in our framework.

\begin{table}[!t]
\centering
\caption{DLR-Based OPF Studies Comparison}
\label{tab:dlr_lit_review}
\begin{tabular}{@{}ccccp{1.5cm}@{}}
\toprule
\textbf{Reference} & \textbf{ACOPF} & \textbf{Ramp} & \textbf{Transient Temp.} & \textbf{Size} \\
\midrule
\cite{glaum_leveraging_2023} & No & No & No & $\approx 100$-bus \\
\cite{jabarnejad_optimal_2016} & No & No & No & 118-bus \\
\cite{verma_real_2020} & Yes & Yes & No & 4-bus \\
\cite{lyu_impacts_2023} & No & No & No & 118-bus \\
\cite{su_dynamic_2024} & Yes (heuristic) & No & No & 39-bus \\
\cite{lee_impacts_2022} & No & No & No & 2000-bus \\
\textbf{This work} & \textbf{Yes} & \textbf{Yes} & \textbf{Yes} & \textbf{2000-bus} \\
\bottomrule
\end{tabular}
\vspace{-2.5em}
\end{table}

Other works consider temperature-dependent power flows by modeling the effects of variable resistivity and sag \cite{murphy2020temperature}, or by controlling thermal dynamics through stochastic formulations \cite{bienstock2016stochastic}. However, these studies do not address large-scale grid optimization or system-level dispatch. No existing approach integrates DLR with large-scale ACOPF nor captures transient behavior critical to fast-timescale grid operations.

This work introduces a novel framework that integrates transient temperature dynamics into a multi-period ACOPF formulation. By deriving a closed-form solution to the heat equation and applying a time-space decomposition, it captures the thermal behavior of transmission lines within a large-scale network. The problem is solved using a bi-level Alternating Direction Method of Multipliers (ADMM) algorithm \cite{sun_two-level_2023}, with a 3-block nonconvex ADMM at the inner level and iterative feasibility enforcement at the outer level. The approach scales to a $2000$-bus network.
We theoretically derive the convergence rate of our algorithm by analyzing the smoothness of the heat dynamics, reformulated as an ordinary differential equation (ODE), consistent with results in \cite{sun_two-level_2023}.
We then empirically demonstrate that DLR significantly reduces system costs and renewable congestion in AC networks, compared to SLR. Additionally, we show that incorporating transient thermal dynamics provides operational headroom and enhances grid flexibility compared to steady-state formulations. 

To our knowledge, this is the first study to quantify the economic and operational value of DLR on a realistically sized grid while retaining full AC power-flow physics and transient conductor-temperature dynamics.

The paper is structured as follows. In section~\ref{sec:temp_dyn}, we derive a closed-form solution to the transient temperature dynamics, and reformulate them with a finite number of nonlinear equalities. In section~ \ref{sec:problem}, we introduce our optimization model. In section~\ref{sec:decomposition}, we present our decomposition strategy and our ADMM algorithm. In section~\ref{sec:convergence}, we prove its convergence. In section \ref{sec:computations}, we present computational experiments.
\section{Study of the Temperature Dynamics}\label{sec:temp_dyn}
\noindent
We now derive a closed-form solution to the temporal evolution of the lines under certain assumptions, and formulate the multi-period temperature model with a finite number of inequalities. 
\subsection{Single-period dynamics}\label{sec:ieee_single}
\noindent
We first study the temperature evolution of a line over a single time interval $[0, \Delta]$, where $\Delta >0$. In our computational study, $\Delta$ is set to 5 minutes. The squared magnitude of current $\iota_{ij}$ on line \(ij\in\sE\)  is assumed to be constant over this time interval. 
Assuming each line \(ij\in\sE\) has no radial or longitudinal gradient of temperature, the conductor temperature \(T_{ij}\) (in Kelvins) of a line evolves according to:
\begin{equation}
  m_{ij}c_{p, ij}\frac{dT_{ij}}{d\tau}
  {=} R_{ij}(T_{ij})\iota_{ij}
  {+} q^{s}_{ij}
  {-} q^{c}_{ij}(T_{ij})
  {-} q^{r}_{ij}(T_{ij}) ,
  \label{eq:heat_balance}
\end{equation}
where the radiative loss is $q^{r}_{ij}(T)=\pi D_0\epsilon\sigma\bigl(T^{4}-T_a^{4}\bigr).$
The solar gain \(q^{s}_{ij}\) depends only on irradiance and thus on geographic location and sky clarity, not on \(T\).
Convective cooling is taken as the forced component $q^{c}_{ij}(T) = K_{c, ij}\bigl(T-T_{a, ij}\bigr)$, where $K_{c , ij}$ is a nonlinear function of the conductor temperature, the wind speed at the line $v_{w, ij}$ and the angle $\phi_{ij}$ between the wind and the line $ij$. The details of the expressions can be found in \cite{ieee2012ieee}.
We now make additional assumptions:
    \begin{assumption}
        The resistivity of the line $R(T_{ij})$ does not depend on the temperature $T_{ij}$. We denote by $r_{ij}$ the resistivity of the line $ij$.\label{ass:resistivity}
    \end{assumption}
    \vspace{-1em}
    \begin{assumption} The weather conditions $(\phi, v_w, T_a)$ are constant over the small time interval $[0, \Delta]$. Therefore, the energy fluxes are functions only of the conductor temperature. \label{ass:constant}
    \end{assumption}
    \vspace{-1em}
    \begin{assumption} For each line, we assume that there exists $k^{0, c}_{ij}, k^c_{ij}$ such that $q^c_{ij}(T) = k^c_{ij, t} \cdot T + k^{0, c}_{ij, t}$. \label{ass:lin-conv} \end{assumption}

    \begin{remark}
        Assumption~\ref{ass:resistivity} simplifies the standard assumption that resistance increases linearly with temperature. Taking $r_{ij}$ as the ``hot" resistance (for $T = T^{\max}$) provides conservative results. Assumption~\ref{ass:constant} is standard and necessary to derive closed-form solutions to \eqref{eq:heat_balance}. Assumption~\ref{ass:lin-conv} approximates the convective term as a linear function of $T$. We observed empirically that the nonlinearity in $K_{c , ij}$ is negligible in the range of temperature we consider. Our linear regression approximation has $R^2 \geq 0.99$ in all weather conditions. 
    \end{remark}
These assumptions allow us to reformulate and solve the differential equation \eqref{eq:heat_balance}.
\begin{theorem}[Closed Form Solution]\label{thm:tempsol}
    Suppose that Assumptions \ref{ass:resistivity}-\ref{ass:lin-conv} hold. Then, for each line $ij$, the heat equation \eqref{eq:heat_balance} can be reformulated as:
    \begin{align}
        \frac{dT_{ij}}{d\tau} = - K_{4 , ij} T_{ij}^4 -K_{1 , ij} T_{ij} {+} K_{0, ij},  \label{eq:temp-dyn-linear-qc}
    \end{align}
    where $K_{0, ij} {=} K_{0 , ij}'{+} r_{ij}'\iota_{ij}$, $r'_{ij}{=} \frac{r_{ij}}{m_{ij}c_{p , ij}}$, $K_{1, ij} {=} \frac{k^c_{ij}}{m_{ij} c_{p , ij}}$, $K_{0 , ij}' {=} \frac{\pi D_{0 , ij} \epsilon \sigma T_{a , ij}^4 {+} q^s_{ij} {-} k^{c,0}_{ij}}{m_{ij} c_{p , ij}}$,  and $K_{4, ij} = \frac{\pi D_{0, ij} \epsilon_{ij} \sigma}{m_{ij} c_{p , ij}}$.\\
    When $K_{4, ij}>0$ and $K_{0, ij}>0$, the quartic polynomial $P_{ij}(T):=  T^4 + \frac{K_{1, ij}}{K_{4, ij}} T - \frac{K_{0, ij}}{K_{4, ij}}$ has two distinct real roots, a positive one denoted by $s_{1, ij}$ and a negative one denoted by $-s_{2, ij}$. Moreover, given an initial temperature $T_{ij ,0}$ at time $\tau=0$, the solution to the ODE \eqref{eq:temp-dyn-linear-qc} is given by (we omit the subscript $ij$):
    {\small
    \begin{align}\label{eq:solT}
       \tau(T) &{=} \begin{aligned}[t]
           &\frac{1}{ K_4} \Bigg[\frac{s_2-s_1}{g_1g_2}\log\frac{|T^2-pT+q|}{|T_0^2-pT_0+q|}\\
           &\hspace{0.1cm} {-} \frac{1}{s_1{+}s_2}\bigg(\frac{1}{g_1}\log\frac{|T{-}s_1|}{|T_0{-}s_1|}
        - \frac{1}{g_2}\log\frac{|T+s_2|}{|T_0+s_2|}\bigg)\\
        &\hspace{0.1cm} 
     + \frac{4s_1s_2}{g_1 g_2\sqrt{g_3}}\left({\arctan}\frac{2T{-}p}{\sqrt{g_3}} {-} {\arctan}\frac{2T_0{-}p}{\sqrt{g_3}}\right) \!\Bigg],
       \end{aligned}
    \end{align}
    }
    where $p=s_2-s_1$, $q = s_1^2 - s_1 s_2 + s_2^2$, $g_1=3s_1^2 - 2 s_1 s_2 + s_2^2$, $g_2 = s_1^2 - 2 s_1 s_2 + 3s_2^2$, and $g_3 = 3s_1^2 - 2s_1s_2 + 3s_2^2$.
\end{theorem}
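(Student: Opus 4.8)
The plan is to establish the three assertions in order: the reformulation \eqref{eq:temp-dyn-linear-qc}, the root structure of $P_{ij}$, and the closed form \eqref{eq:solT}. For the reformulation I would simply divide \eqref{eq:heat_balance} by $m_{ij}c_{p,ij}$ and feed in the three modeling assumptions. Assumption~\ref{ass:resistivity} replaces $R_{ij}(T_{ij})\iota_{ij}$ by $r_{ij}\iota_{ij}$, contributing the term $r'_{ij}\iota_{ij}$; the radiative loss splits as $q^r_{ij}(T)=\pi D_0\epsilon\sigma T^4-\pi D_0\epsilon\sigma T_a^4$, yielding the quartic coefficient $-K_{4,ij}T^4$ plus a constant $\pi D_0\epsilon\sigma T_a^4/(m_{ij}c_{p,ij})$; and Assumption~\ref{ass:lin-conv} turns $-q^c_{ij}(T)$ into $-K_{1,ij}T - k^{c,0}_{ij}/(m_{ij}c_{p,ij})$. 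Absorbing all temperature-independent terms into $K_{0,ij}=K'_{0,ij}+r'_{ij}\iota_{ij}$ reproduces the stated coefficients, so this step is routine bookkeeping.

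For the root structure I would analyze $P(T)=T^4+(K_1/K_4)T-K_0/K_4$ by calculus rather than an explicit quartic formula. Writing $a:=K_1/K_4\ge 0$ and $b:=K_0/K_4>0$, I note $P(0)=-b<0$ and $P(T)\to+\infty$ as $T\to\pm\infty$, while $P'(T)=4T^3+a$ is strictly increasing with its unique zero at $T^\star=-(a/4)^{1/3}\le 0$. Hence $P$ strictly decreases on $(-\infty,T^\star)$ and strictly increases on $(T^\star,\infty)$, attaining a unique global minimum whose value is at most $P(0)<0$. Monotonicity on each branch then gives exactly one root per branch; since $P(0)<0$, the increasing-branch root $s_1$ is positive and the other root $-s_2$ is negative, so the two real roots are distinct. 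I would then record $P(T)=(T-s_1)(T+s_2)(T^2-pT+q)$ and verify by matching coefficients that $p=s_2-s_1$ and $q=s_1^2-s_1s_2+s_2^2$, and that the quadratic factor is irreducible because its discriminant is $p^2-4q=-g_3<0$. This last fact is exactly what forces the $\arctan$ (rather than logarithmic) form of the quadratic integral.

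For the closed form I would rewrite \eqref{eq:temp-dyn-linear-qc} as $dT/d\tau=-K_4\,P(T)$ and separate variables, giving $\tau(T)=-\tfrac{1}{K_4}\int_{T_0}^{T}dT'/P(T')$; since $T$ converges monotonically to the stable equilibrium $s_1$, the integration path avoids the roots and the integral is well defined. I would then expand
\[
\frac{1}{P(T)}=\frac{A}{T-s_1}+\frac{B}{T+s_2}+\frac{CT+D}{T^2-pT+q},
\]
computing the coefficients by residues and coefficient matching. Here $g_1=3s_1^2-2s_1s_2+s_2^2$ and $g_2=s_1^2-2s_1s_2+3s_2^2$ equal, up to the common factor $s_1+s_2$, the derivative $P'$ at the two real roots (so $A=1/P'(s_1)=1/[(s_1+s_2)g_1]$ and $B=1/P'(-s_2)=-1/[(s_1+s_2)g_2]$), while the cubic-coefficient relation $A+B+C=0$ gives $C=-2(s_2-s_1)/(g_1g_2)$ and the constant-term equation gives $D=(s_1^2-4s_1s_2+s_2^2)/(g_1g_2)$. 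Integrating the simple poles yields the $\log|T-s_1|$ and $\log|T+s_2|$ terms; for the quadratic piece I would split $CT+D=\tfrac{C}{2}(2T-p)+(D+\tfrac{Cp}{2})$, so the first part integrates to $\tfrac{C}{2}\log|T^2-pT+q|$ and the second, after completing the square, to $(D+\tfrac{Cp}{2})\tfrac{2}{\sqrt{g_3}}\arctan\tfrac{2T-p}{\sqrt{g_3}}$. Substituting the coefficients, the overall factor $-1/K_4$ flips the signs to those in \eqref{eq:solT}, and evaluating between $T_0$ and $T$ completes the derivation.

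I expect the main obstacle to be purely algebraic: confirming that the partial-fraction coefficients collapse to the compact expressions in $g_1,g_2,g_3$, and in particular that the combination $D+\tfrac{Cp}{2}$ multiplying the $\arctan$ simplifies to $-2s_1s_2/(g_1g_2)$, so that after the factor $2/\sqrt{g_3}$ and the sign flip it matches the stated $4s_1s_2/(g_1g_2\sqrt{g_3})$. A convenient safeguard is to exploit the root--coefficient identities $(s_2-s_1)(s_1^2+s_2^2)=K_1/K_4$ and $s_1s_2(s_1^2-s_1s_2+s_2^2)=K_0/K_4$ and to validate the simplifications on a concrete numerical choice of roots before committing to the general symbolic computation.
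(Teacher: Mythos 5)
Your proposal is correct and follows essentially the same route as the paper's proof: write the ODE in separable form $d\tau = -dT/(K_4 P(T))$, factor $P(T)=(T-s_1)(T+s_2)(T^2-pT+q)$, decompose into partial fractions, and integrate term by term; your coefficients $A=1/[(s_1+s_2)g_1]$, the residue at $-s_2$, and the simplification $D+\tfrac{Cp}{2}=-2s_1s_2/(g_1g_2)$ all match the paper's values and reproduce \eqref{eq:solT}. The only minor differences are that you establish the two distinct real roots by monotonicity of $P$ (sign of $P'$ and $P(0)<0$) where the paper invokes the quartic discriminant, and that you spell out the integration and coefficient-matching details the paper compresses into ``standard algebra.''
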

\begin{proof}
    The proof is presented in Appendix \ref{app:proof_tempsol}
\end{proof}

We now show that the temperature dynamics computed in Theorem \ref{thm:tempsol} are monotonic in time. We first define the steady-state temperature.
\begin{definition}[Steady-state]
The \emph{steady-state temperature} is the limiting temperature of the conductor as $\tau \to \infty$.
\end{definition}
We fix a line \( ij \), and omit the subscript in what follows.

\begin{proposition}\label{prop:time-monotonicity}
The function $T(\tau)$ converges monotonically to $s_1$: $T(\tau) \underset{\tau \to \infty}{\searrow} s_1$ if $T_0 > s_1$, and $T(\tau) \underset{\tau \to \infty}{\nearrow} s_1$ if $T_0 < s_1$.
\end{proposition}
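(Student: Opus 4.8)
The plan is to treat \eqref{eq:temp-dyn-linear-qc} as an autonomous scalar ODE $\frac{dT}{d\tau}=f(T)$ with $f(T)=-K_4 T^4-K_1 T+K_0=-K_4\,P(T)$, and to run a standard phase-line (sign) analysis around the equilibrium $s_1$. The starting point is to put the right-hand side in factored form. Since $P$ has the two real roots $s_1>0$ and $-s_2<0$ together with a conjugate pair of complex roots (a monic quartic with no cubic term and only two real roots), I would write
\begin{equation*}
  f(T) = -K_4\,(T - s_1)(T + s_2)\,(T^2 - pT + q),
\end{equation*}
where $p=s_2-s_1$ and $q=s_1^2-s_1 s_2+s_2^2$ are exactly the quantities from Theorem~\ref{thm:tempsol}; one checks consistency with the vanishing sum of roots ($s_1-s_2+2\cdot\tfrac{p}{2}=0$) and with the vanishing sum of pairwise products, which reproduces $q$. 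The one genuine computation is that the quadratic factor never vanishes: its discriminant is $p^2-4q=-(3s_1^2-2s_1 s_2+3s_2^2)=-g_3<0$, because the quadratic form $3x^2-2xy+3y^2$ is positive definite. Hence $T^2-pT+q>0$ for all real $T$.

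With the factorization in hand, the sign of $f$ is immediate. Because $K_4>0$, because $T^2-pT+q>0$, and because the conductor temperature remains positive (Kelvin), so that $T+s_2>0$, we get
\begin{equation*}
  \operatorname{sign} f(T) = -\operatorname{sign}(T - s_1) \quad \text{for } T > 0 .
\end{equation*}
Thus $f(T)<0$ when $T>s_1$, $f(T)>0$ when $0<T<s_1$, and $f(s_1)=0$; i.e. $s_1$ is the unique equilibrium in the physical range and it attracts from both sides.

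The conclusion then follows from the monotone-bounded argument for autonomous ODEs. I would first invoke uniqueness of solutions — $f$ is polynomial, hence $C^1$ and locally Lipschitz, so Picard--Lindel\"of applies — to guarantee that a trajectory cannot cross the equilibrium $s_1$, so the sign of $T-s_1$ is preserved for all $\tau$. If $T_0>s_1$, then $T(\tau)>s_1$ and $\frac{dT}{d\tau}=f(T)<0$ for all $\tau$, so $T(\cdot)$ is strictly decreasing and bounded below by $s_1$, hence converges to some limit $L\ge s_1$. Passing to the limit and using continuity of $f$ forces $f(L)=0$ (otherwise $\frac{dT}{d\tau}$ stays bounded away from $0$ and $T$ escapes to infinity); since $s_1$ is the only root of $f$ in $[s_1,\infty)$, we obtain $L=s_1$, i.e. $T(\tau)\searrow s_1$. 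The case $T_0<s_1$ is symmetric: $T(\cdot)$ increases and is bounded above by $s_1$, giving $T(\tau)\nearrow s_1$.

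I do not expect a real obstacle here: the argument is routine once the sign of $f$ is pinned down. The only place the specific structure of the problem enters is verifying that the complex-root quadratic factor is strictly positive so it cannot flip the sign of $f$, which reduces to the positive-definiteness of the form in $g_3$, as above. The one modeling caveat worth stating explicitly is that signing the factor $T+s_2$ uses $T>0$ (equivalently $T>-s_2$), which holds for every physically meaningful initial condition.
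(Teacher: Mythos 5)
Your proof is correct, and it rests on the same underlying fact as the paper's proof---the sign of the quartic $P(T)$ on either side of its positive root $s_1$---but the mechanics are genuinely different. The paper argues through the inverse function: from $\frac{d\tau}{dT} = -\frac{1}{K_4 P(T)}$ it reads off that $\tau(T)$ is increasing on $[0,s_1)$ and decreasing on $(s_1,\infty)$, and then inverts to conclude monotone convergence of $T(\tau)$. You instead run a phase-line argument directly on the autonomous ODE: sign of $f(T)=-K_4P(T)$, Picard--Lindel\"of uniqueness to forbid crossing the equilibrium, monotone-bounded convergence, and continuity of $f$ to force the limit to be a zero of $f$, hence $s_1$. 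Your route is more self-contained and fills in steps the paper leaves implicit: (i) you verify explicitly that the complex-root quadratic factor is strictly positive (discriminant $p^2-4q=-g_3<0$ by positive-definiteness of $3x^2-2xy+3y^2$), which both proofs need in order to sign $P$; (ii) you justify why the limit is exactly $s_1$ and not merely some value above/below it, whereas the paper's inversion step tacitly uses that $\tau(T)\to\infty$ as $T\to s_1$ (the logarithmic divergence in \eqref{eq:solT}); (iii) your uniqueness argument makes precise why the trajectory never reaches or crosses $s_1$ in finite time, and your appeal to $T>0$ to sign the factor $T+s_2$ mirrors the paper's ``Since $T>0$'' remark. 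As a side note, your sign bookkeeping is internally consistent, whereas the first sentence of the paper's proof states the signs of $\frac{d\tau}{dT}$ backwards (its following sentence has them right); nothing substantive hinges on this, and both arguments stand on the same factorization established in Theorem~\ref{thm:tempsol}.
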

\begin{proof}
From the inverse ODE $\frac{d\tau}{dT} = -\frac{1}{K_4 P(T)}$
, we see $\frac{d\tau}{dT} < 0$ for $0 \le T < s_1$ and $\frac{d\tau}{dT} > 0$ for $T > s_1$. Since $T > 0$, $\tau(T)$ is increasing for $T < s_1$, decreasing for $T > s_1$, so its inverse $T(\tau)$ increases to $s_1$ when $T_0 <s_1$, and decreases to $s_1$ when $T_0 > s_1$.
\end{proof}
Proposition \ref{prop:time-monotonicity} shows that the steady-state temperature is computed as the positive root \( s_{1,ij} \) of \( P_{ij} \), and can therefore be obtained via a root-finding algorithm, without optimization or ODE-solving software. 
\subsection{Multi-period Dynamics and Finite Reformulation}\label{sec:ieee_multi_period}
In the multi-period ACOPF framework, the grid operator makes decisions in sequence every $\Delta$ seconds. 
We therefore consider dispatch intervals $((t -1) \Delta, t\Delta]$, indexed by $t \in \sT$. The temperature $T_{ij, t}$ denotes the temperature at the end of the interval $t$, i.e. at time $\tau = t\Delta$.
We denote by $W_{ij, t} = (v_{w, ij, t}, \phi_{ij,t}, T_{a , ij, t})$ the vector of weather conditions on line $ij$ on dispatch interval $t$. Let $f_{W, \Delta} : \RR^+ \times \RR^+ \to \RR^+$ be the solution of eq.~\eqref{eq:temp-dyn-linear-qc}, on an interval of length $\Delta$, with weather $W$. The function $f_{W,\Delta}$ takes as input the temperature at the beginning of the interval and the square magnitude of current on the interval, and maps these to the temperature at the end of the interval. Thus, $f_{W,\Delta}$ provides the temperature update associated with a single dispatch step of duration $\Delta$.
With this notation, for any line $ij$, we get: 
\begin{align}\label{eq:multi-temp_evol}
    T_{ij, t} = f_{W_{ij,t}, \Delta}(\iota_{ij, t}, T_{ij, t{-}1}), \quad \forall t \in \mathcal T.
\end{align}

Using the closed-form solution of Theorem \ref{thm:tempsol}, we reformulate the multi-period evolution \eqref{eq:multi-temp_evol} as:
\begingroup
\allowdisplaybreaks
\begin{subequations}\label{eq:reform}
\begin{IEEEeqnarray}{ll}
K_{0,ij,t}= \frac{
  \pi D_0\epsilon\sigma T_{a,ij,t}^4
  {+} r_{ij}\iota_{ij,t} 
  {+} q_{s,ij,t}
  {-} k^{c,0}_{ij, t}}
 {m_{ij}c_{p, ij}}, \; \label{eq:reform:K0}
  &\forall t \in \sT,\IEEEeqnarraynumspace\\
K_{1,ij,t}= \frac{k^c_{ij, t}}{m_{ij}c_{p, ij}},\label{eq:reform:K1}
  &\forall t \in \sT,\IEEEeqnarraynumspace\\
K_{4,ij}=  \frac{\pi D_0\epsilon\sigma}{m_{ij}c_{p, ij}},\label{eq:reform:K4}&\\
g_{1,ij,t}= 3s_{1,ij,t}^2 - 2s_{1,ij,t}s_{2,ij,t} + s_{2,ij,t}^2, \label{eq:reform:g1}
  &\forall t \in \sT,\IEEEeqnarraynumspace\\
g_{2,ij,t}= s_{1,ij,t}^2 - 2s_{1,ij,t}s_{2,ij,t} + 3s_{2,ij,t}^2, \label{eq:reform:g2}
  &\forall t \in \sT,\IEEEeqnarraynumspace\\
g_{3,ij,t}= 3s_{1,ij,t}^2 - 2s_{1,ij,t}s_{2,ij,t} + 3s_{2,ij,t}^2, \label{eq:reform:g3}
  &\forall t \in \sT,\IEEEeqnarraynumspace\\
p_{ij,t}= s_{2,ij,t} - s_{1,ij,t}, \label{eq:reform:p}
  &\forall t \in \sT,\IEEEeqnarraynumspace\\
q_{ij,t}= s_{1,ij,t}^2 - s_{1,ij,t}s_{2,ij,t} + s_{2,ij,t}^2, \label{eq:reform:q}
  &\forall t \in \sT,\IEEEeqnarraynumspace\\
K_{1,ij,t}= K_{4,ij}p_{ij,t}\!\left(s_{1,ij,t}^2 + s_{2,ij,t}^2\right), \label{eq:reform:root1}
  &\forall t \in \sT,\IEEEeqnarraynumspace\\
K_{0,ij,t}= K_{4,ij}q_{ij,t}s_{1,ij,t}s_{2,ij,t}, \label{eq:reform:root2}
  &\forall t \in \sT,\IEEEeqnarraynumspace\\
(T_{ij,t-1}-s_{1,ij,t})e^{h^{1}_{ij,t}}
  = T_{ij,t}-s_{1,ij,t}, \label{eq:reform:r1}
  &\forall t \in \sT,\IEEEeqnarraynumspace\\
(T_{ij,t-1}+s_{2,ij,t})e^{h^{2}_{ij,t}}
  = T_{ij,t}+s_{2,ij,t}, \label{eq:reform:r2}
  &\forall t \in \sT,\IEEEeqnarraynumspace\\
e^{h^{\text{quad}}_{ij,t}} 
  =\frac{T_{ij,t}^2 - p_{ij,t}T_{ij,t} + q_{ij,t}}
        {T_{ij,t-1}^2 - p_{ij,t}T_{ij,t-1}   + q_{ij,t}}, \label{eq:reform:quad}
  &\forall t\in \sT,\IEEEeqnarraynumspace\\[1pt]
K_{4,ij}{=} {}\mathrlap{\begin{aligned}[t]
 &{-}\frac{1}{s_{1,ij,t}+s_{2,ij,t}}
   \Bigl(\frac{h^{1}_{ij,t}}{g_{1,ij,t}}
        -\frac{h^{2}_{ij,t}}{g_{2,ij,t}}\Bigr)\\
 &\quad{+}\frac{s_{2,ij,t}-s_{1,ij,t}}{g_{1,ij,t}g_{2,ij,t}}
   h^{\text{quad}}_{ij,t}\\[1pt]
&\quad{+}\frac{4s_{1,ij,t}s_{2,ij,t}}{g_{1,ij,t}g_{2,ij,t}\sqrt{g_{3,ij,t}}}\Bigl[
  \xi_{ij,t}'{-}\xi_{ij,t}
\Bigr],
   \end{aligned}}& \forall t \in \sT, \label{eq:reform:Tempsol}
\end{IEEEeqnarray}  
\end{subequations}
\endgroup
where we used the shorthand notation $\xi_{ij, t}$ to denote $\arctan\!\bigl(\tfrac{2T_{ij,t-1}-p_{ij,t}}{\sqrt{g_{3,ij,t}}}\bigr)$ and $\xi'_{ij, t}$ to denote $\arctan\!\bigl(\tfrac{2T_{ij,t}-p_{ij,t}}{\sqrt{g_{3,ij,t}}}\bigr)$. We now prove that this model computes the line dynamics.  

\begin{proposition}
    Eq.~\eqref{eq:multi-temp_evol} is equivalent to \eqref{eq:reform}.
\end{proposition}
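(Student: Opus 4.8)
The plan is to prove the two implications simultaneously by recognizing the system \eqref{eq:reform} as a \emph{lifted} (auxiliary-variable) encoding of the single scalar relation obtained from the closed-form solution \eqref{eq:solT} of Theorem~\ref{thm:tempsol}. Recall that $f_{W_{ij,t},\Delta}$ is, by definition, the time-$\Delta$ flow map of the ODE \eqref{eq:temp-dyn-linear-qc} whose coefficients $K_{0,ij,t},K_{1,ij,t},K_{4,ij}$ are fixed by the weather $W_{ij,t}$ and the current $\iota_{ij,t}$. Hence \eqref{eq:multi-temp_evol}, i.e. $T_{ij,t}=f_{W_{ij,t},\Delta}(\iota_{ij,t},T_{ij,t-1})$, holds exactly when the trajectory of \eqref{eq:temp-dyn-linear-qc} started at $T_{ij,t-1}$ reaches $T_{ij,t}$ after time $\Delta$. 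By Theorem~\ref{thm:tempsol} this is equivalent to requiring that $\tau=\Delta$, $T=T_{ij,t}$, $T_0=T_{ij,t-1}$ satisfy \eqref{eq:solT}. I would therefore show that \eqref{eq:reform:K0}--\eqref{eq:reform:Tempsol} are satisfiable in the auxiliary variables $s_{1},s_{2},p,q,g_{1},g_{2},g_{3},h^{1},h^{2},h^{\text{quad}}$ if and only if this single equation holds.

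For the forward direction, given $(T_{ij,t-1},\iota_{ij,t},T_{ij,t})$ satisfying \eqref{eq:multi-temp_evol} I would simply \emph{define} every auxiliary variable through its own equation: $K_{0},K_{1},K_{4}$ by \eqref{eq:reform:K0}--\eqref{eq:reform:K4} (these reproduce the coefficients of Theorem~\ref{thm:tempsol}), $s_1$ as the positive real root of $P_{ij}$ and $s_2$ as the absolute value of its negative real root, and $g_1,g_2,g_3,p,q$ by \eqref{eq:reform:g1}--\eqref{eq:reform:q}. The content of \eqref{eq:reform:root1}--\eqref{eq:reform:root2} is then the Vieta step: I claim $P_{ij}(T)=(T-s_1)(T+s_2)(T^2-pT+q)$. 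Expanding a general factorization $(T-s_1)(T+s_2)(T^2+bT+c)$ and matching the vanishing $T^3$ and $T^2$ coefficients of $P_{ij}$ forces $b=-p$ and $c=q$ with $p,q$ exactly as in \eqref{eq:reform:p}--\eqref{eq:reform:q}, while matching the $T^1$ and $T^0$ coefficients yields precisely \eqref{eq:reform:root1} and \eqref{eq:reform:root2}. The quadratic factor has discriminant $p^2-4q=-g_3<0$, so it contributes no real root, which is consistent with Theorem~\ref{thm:tempsol} and confirms $s_1,-s_2$ are the only real roots.

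The delicate step is \eqref{eq:reform:r1}--\eqref{eq:reform:quad}, where the logarithms of \eqref{eq:solT} are re-encoded without absolute values. I would set $h^{1}=\log|T_{ij,t}-s_1|-\log|T_{ij,t-1}-s_1|$, and likewise for $h^{2},h^{\text{quad}}$, and then argue that each ratio inside is positive so that the absolute values may be dropped, turning the identities into the exponential forms \eqref{eq:reform:r1}--\eqref{eq:reform:quad}. For $h^{1}$ this is exactly where Proposition~\ref{prop:time-monotonicity} is needed: since $T(\tau)$ is monotone and converges to $s_1$, the endpoints $T_{ij,t-1}$ and $T_{ij,t}$ lie on the same side of $s_1$, so $T_{ij,t}-s_1$ and $T_{ij,t-1}-s_1$ share a sign and their ratio is positive. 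For $h^{2}$ positivity is immediate, since temperatures are in Kelvin and $s_2>0$ give $T+s_2>0$; for $h^{\text{quad}}$ the quadratic $T^2-pT+q$ is everywhere positive because its discriminant $-g_3$ is negative. The $\arctan$ terms need no sign care and become $\xi_{ij,t},\xi_{ij,t}'$ once $g_3>0$ makes $\sqrt{g_3}$ real. Substituting all of these into \eqref{eq:solT} at $\tau=\Delta$ and clearing the factor $1/K_{4,ij}$ gives exactly \eqref{eq:reform:Tempsol}.

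For the converse, any assignment satisfying \eqref{eq:reform} reverses each step: \eqref{eq:reform:root1}--\eqref{eq:reform:root2} force $s_1,-s_2$ to be the real roots of $P_{ij}$ (unique by Theorem~\ref{thm:tempsol}), \eqref{eq:reform:r1}--\eqref{eq:reform:quad} recover the logarithmic terms, and \eqref{eq:reform:Tempsol} is then \eqref{eq:solT} at $\tau=\Delta$, so $T_{ij,t}=f_{W_{ij,t},\Delta}(\iota_{ij,t},T_{ij,t-1})$ by definition of the flow map; the monotonicity of Proposition~\ref{prop:time-monotonicity} makes the end temperature solving this relation unique, so no spurious solutions are introduced. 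The main obstacle I anticipate is the sign and branch bookkeeping of the third paragraph, ensuring every argument of a logarithm and of $\arctan$ stays in the range where the reformulated equalities are genuinely equivalent to \eqref{eq:solT}, since the algebraic (Vieta) part is a routine coefficient match.
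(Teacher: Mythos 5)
Your proof is correct and follows essentially the same route as the paper's: recognize \eqref{eq:reform} as an encoding of the closed-form solution \eqref{eq:solT} via exponential/logarithmic equations, identify \eqref{eq:reform:g1}--\eqref{eq:reform:q} as definitions, and establish \eqref{eq:reform:root1}--\eqref{eq:reform:root2} by Vieta coefficient matching of the quartic factorization $(T-s_1)(T+s_2)(T^2-pT+q)$. The only difference is thoroughness: your sign and branch bookkeeping (Proposition~\ref{prop:time-monotonicity} for $h^{1}$, Kelvin positivity for $h^{2}$, the negative discriminant $p^2-4q=-g_3$ for $h^{\text{quad}}$, and the uniqueness argument in the converse) rigorously justifies what the paper compresses into the single remark that ``$\log$ is bijective.''
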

\begin{proof}
    Equations \eqref{eq:reform:r1}--\eqref{eq:reform:Tempsol} are equivalent to \eqref{eq:solT}, as $\log$ is bijective. Similarly, \eqref{eq:reform:g1}--\eqref{eq:reform:q} define $g_1, g_2, g_3, p, q$ as in Theorem~\ref{thm:tempsol}.
    It remains to show that \eqref{eq:reform:root1}--\eqref{eq:reform:root2} ensure the factorization of the quartic polynomial $f(T) = T^4 + \frac{K_1}{K_4} T - \frac{K_0}{K_4}$
    as $f(T) = (T - s_1)(T + s_2)(T^2 - pT + q)$,
    where $p = s_2 - s_1$ and $q = s_1 s_2 + p(s_2 - s_1)$ by definition. Expanding the right-hand side and matching coefficients yields $\frac{K_1}{K_4} = s_1 s_2 p + q (s_2 - s_1)$ and $\frac{K_0}{K_4} = s_1 s_2 q$
    which are precisely \eqref{eq:reform:root1} and \eqref{eq:reform:root2}. This completes the proof.
\end{proof}
The reformulation \eqref{eq:reform} is composed of a finite number of nonlinear inequalities, instead of ODEs, allowing for explicit implementation using optimization modeling software.

\subsection{Transient and steady-state temperature models}

In the DLR framework, transmission limits are enforced by ensuring that line temperatures remain below the conductor's maximum admissible temperature at all times. The monotonicity of temperature (Proposition~\ref{prop:time-monotonicity}) allows us to only impose:
\begin{align}
    T_{ij, t} \leq T^{\max},&& \forall t \in \sT. \label{eq:temp:temp_limit}
\end{align}
This constraint ensures safe operation by limiting conductor sag and preserving the line's electrical characteristics near nominal conditions \cite{murphy2020temperature}. 
Previous DLR studies relied on steady-state ratings, where the temperature at the end of a dispatch interval is computed as the steady-state temperature. As noted in Proposition \ref{prop:time-monotonicity}, the steady-state temperature can be computed by replacing $\eqref{eq:reform:r1}-\eqref{eq:reform:Tempsol}$ by: 
\begin{align}\label{eq:reform:ss}
        T_{ij,t}&=s_{1,ij,t}, \qquad \forall t \in \mathcal T.
\end{align}

We extend beyond this simplification and model both the steady-state and transient temperature.
To achieve this, we leverage the reformulation Eq.~\eqref{eq:reform}. Since all variables in this reformulation are uniquely determined by the current vector $\iota_{ij, :}$, we define the temperature models using only the variables $\iota_{ij, :}$ and $T_{ij, :}$. 
\begin{enumerate}
    \item \hspace{-0.5em}The \textbf{transient temperature model}, $\mathrm{Tmp}^{\mathrm{Trans}}_{ij}$, is the set:
    \begin{equation} \label{eq:trans_model}
    \mathrm{Tmp}^{\mathrm{Trans}}_{ij} = \left\{ (\iota_{ij, :}, T_{ij, :}) \given  \text{Eqs. \eqref{eq:reform} and \eqref{eq:temp:temp_limit}} \right\}.
    \end{equation}

    \item \hspace{-0.8em} The \textbf{steady-state temperature model}, $\mathrm{Tmp}^{\mathrm{SS}}_{ij}$, is the set:
\begin{align} \label{eq:ss_model}
  \hspace{-2em} \mathrm{Tmp}^{\mathrm{SS}}_{ij} =
  \left\{ (\iota_{ij, :}, T_{ij, :}) \given 
  \text{Eqs.~\eqref{eq:reform:K0}-\eqref{eq:reform:root2}, \eqref{eq:temp:temp_limit}, and   \eqref{eq:reform:ss}}
  \right\}.
\end{align}
\end{enumerate}
We use $\mathrm{Tmp}_{ij}$ to denote either of these temperature models. 

The transient and steady-state thermal behavior is illustrated over three time periods in Fig.~\ref{fig:ss-trans-temp}.  

\begin{figure}[h!]
    \vspace{-1em}
    \centering
    \includegraphics[width=0.7\linewidth]{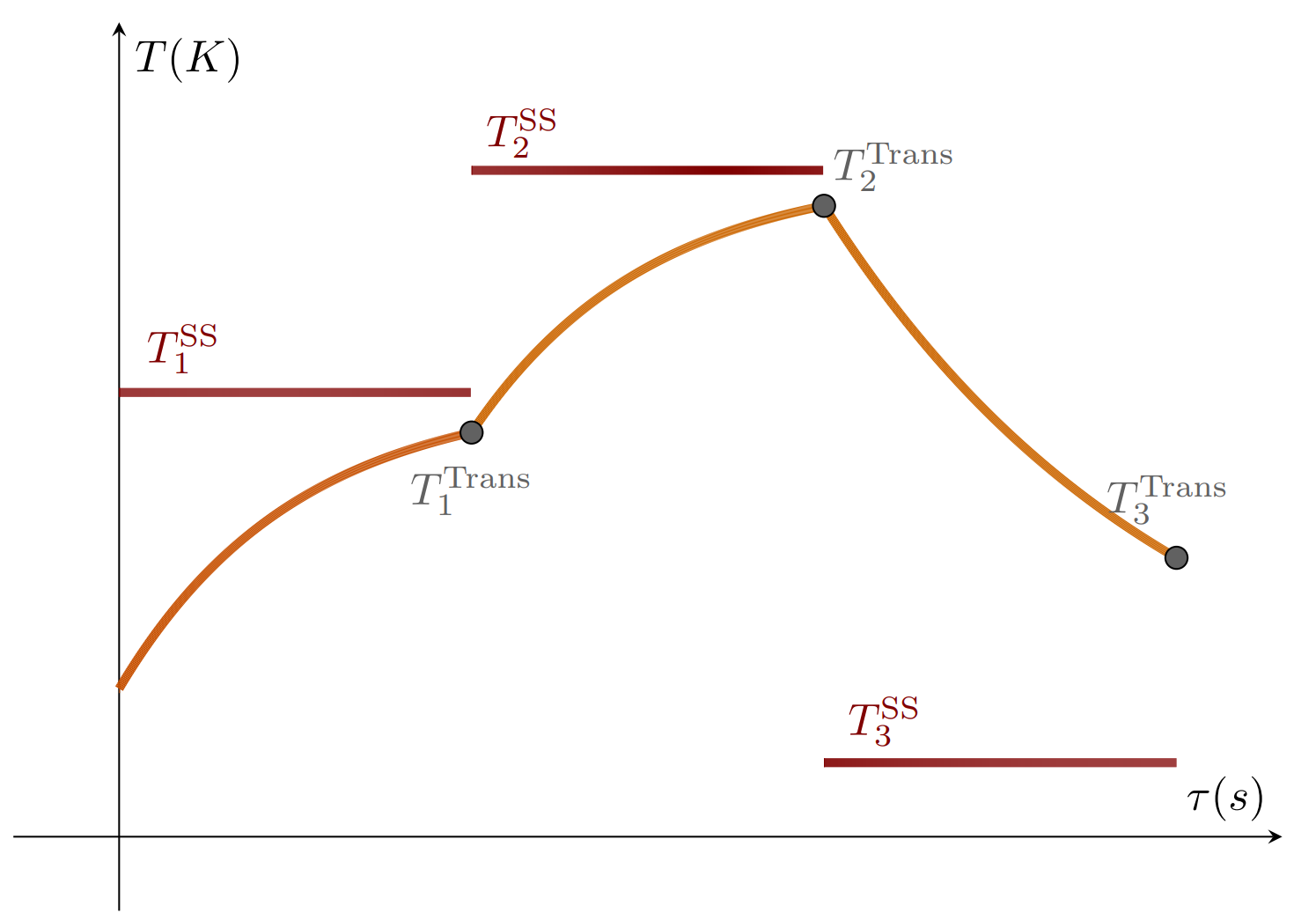}
    \vspace{-1em}
    \caption{Steady-state and transient temperature evolution on three time intervals.}
    \label{fig:ss-trans-temp}
\end{figure}

\section{Multi-period ACOPF with DLR and Ramping constraints}\label{sec:problem}
\noindent
We now present the multi-period ACOPF with DLR, also denoted as DLR-ACOPF. Following the classical multi-period ACOPF framework, we aim to minimize the total generation cost while maintaining admissible currents on the transmission system and satisfying ramping constraints. For any variable $z$, we use \( z_{:, j} \) and \( z_{i, :} \) to denote column and row slices, respectively. DLR-ACOPF writes:
\begin{subequations}\label{eq:synthetic_problem}
\begin{IEEEeqnarray}{rrl}
    \smashoperator[r]{\min_{\substack{%
   (p^{G},q^{G}) \in \mathbb{R}^{|\sG|\times|\sT|}\\
   (e,f)         \in \mathbb{R}^{|\sN|\times|\sT|}\\
   (T,\iota,I^{\mathrm{re}},I^{\mathrm{im}})%
                 \in \mathbb{R}^{|\sB|\times|\sT|}}}}
& \sum_{t\in\sT}\sum_{g\in\sG} C_g\bigl(p^{G}_{g,t}\bigr)
\IEEEeqnarraynumspace \IEEEnonumber\\[4pt]
\text{s.t.}\;
&(p^{G}_{:,t}{,}\, q^{G}_{:,t}{,}\, e_{:,t}{,}\, f_{:,t},
  I^{\mathrm{re}}_{:,t}{,}\, I^{\mathrm{im}}_{:,t}{,}\, \iota_{:,t})
  \in \mathrm{AC}_t, &\, \forall t\in\sT,
\IEEEeqnarraynumspace \IEEEyesnumber \label{eq:synthetic_problem:temp}\\
&(T_{ij,:},\iota_{ij,:}) \in \mathrm{Tmp}_{ij},\;
  & \forall ij\in\sE,
\IEEEeqnarraynumspace \IEEEyesnumber\\
&p^{G}_{g,:} \in \mathrm{Rmp}_g,\; & \forall g\in\sG.%
\IEEEeqnarraynumspace \IEEEyesnumber
\end{IEEEeqnarray}
\end{subequations}

The set $\mathrm{AC}_t$ denotes the following set of AC-OPF constraints:
\begin{subequations}\label{eq:ACOPF_rect}
\allowdisplaybreaks
\begin{IEEEeqnarray}{ll}
p_{i,t} = \bar G_i\nu_{i,t}
  +\sum_{j\in\sN_i}(C_{ij,t}G_{ij}+S_{ij,t}B_{ij}),\quad\label{eq:ac:real_power_balance}
  &\forall i\in\sI,\IEEEeqnarraynumspace\\[1pt]
q_{i,t}{=}{-}\bar B_i\,\nu_{i,t}
  +\sum_{j\in\sN_i}(S_{ij,t}G_{ij}{-}C_{ij,t}B_{ij}),\label{eq:ac:im_power_balance}
  &\forall i\in\sI,\IEEEeqnarraynumspace\\[1pt]
p_{i,t}= \sum_{g\in G_i}p^G_{g,t}-P^D_{i,t},\label{eq:AC:real_net_generation}
  &\forall i\in\sI,\IEEEeqnarraynumspace\\
q_{i,t}= \sum_{g\in G_i}q^G_{g,t}-Q^D_{i,t},\label{eq:AC:imaginary_net_generation}
  &\forall i\in\sI,\IEEEeqnarraynumspace\\
(V_i^{\min})^2\le \nu_{i,t}\le (V_i^{\max})^2,\label{eq:ac:voltage_limits}
  &\forall i\in\sI,\IEEEeqnarraynumspace\\
\theta_{ij}^{\min}\le \phi_{i,t}-\phi_{j,t}\le\theta_{ij}^{\max},\label{eq:ac:angle_limits}
  &\forall ij\in\sB,\IEEEeqnarraynumspace\\
P_g^{\min}\le p^G_{g,t}\le P_g^{\max},\label{eq:ac:real_limits}
  &\forall g\in\sG,\IEEEeqnarraynumspace\\
Q_g^{\min}\le q^G_{g,t}\le Q_g^{\max},\label{eq:ac:reactive_limits}
  &\forall g\in\sG,\IEEEeqnarraynumspace\\
I_{ij,t}^{\mathrm{re}}=G_{ij}(e_{i,t}-e_{j,t})-B_{ij}(f_{i,t}-f_{j,t}),
  \label{eq:ac:real_current}&\forall ij\in\sE,\IEEEeqnarraynumspace\\
I_{ij,t}^{\mathrm{im}}=G_{ij}(f_{i,t}-f_{j,t})+B_{ij}(e_{i,t}-e_{j,t}), \quad
  \label{eq:ac:imaginary_current}&\forall ij\in\sE,\IEEEeqnarraynumspace\\
\iota_{ij,t}= (I_{ij,t}^{\mathrm{re}})^{2}+(I_{ij,t}^{\mathrm{im}})^{2},
  \label{eq:ac:current_magnitude}&\forall ij\in\sE, \IEEEeqnarraynumspace
\end{IEEEeqnarray}
\end{subequations}
where for notational convenience we used $\nu_{i,t}=e_{i,t}^{2}+f_{i,t}^{2}$,
$C_{ij,t}=e_{i,t}e_{j,t}+f_{i,t}f_{j,t}$,
$S_{ij,t}=f_{i,t}e_{j,t}-e_{i,t}f_{j,t}$, $\bar G_i = G_i + G_i^{s}$, $\bar B_i = B_i + B_i^{s}$, and $\phi_{i,t} = \operatorname{atan2}(f_{i,t},e_{i,t})$.

In this formulation, eqs.~\eqref{eq:ac:real_power_balance} and~\eqref{eq:ac:im_power_balance} are the power balance equations. Eqs.~\eqref{eq:AC:real_net_generation} and~\eqref{eq:AC:imaginary_net_generation} compute the demand at each bus. Eqs.~\eqref{eq:ac:voltage_limits}-\eqref{eq:ac:angle_limits} are the voltage and angle limits on the lines, respectively. Eqs.~\eqref{eq:ac:real_limits} and~\eqref{eq:ac:reactive_limits} are the generation limits of the generators. Finally, eqs.~\eqref{eq:ac:real_current} and~\eqref{eq:ac:imaginary_current} compute the real and imaginary part of the current, and eq.~\eqref{eq:ac:current_magnitude} computes the current magnitude squared. 

\noindent
The ramping constraints of generator $g \in \sG$, $\mathrm{Rmp}_g$ write:
\begin{align}
     {P^{Rmp, -}_g} \leq p^G_{g, t} - p^G_{g, t-1} \leq {P^{Rmp, +}_g}, \quad& t \in \sT'.
    \label{eq:ac:Ramping}
\end{align}

We now prove that problem \eqref{eq:synthetic_problem} with $\mathrm{Tmp}^{\mathrm{Trans}}{ij}$ is a relaxation of problem \eqref{eq:synthetic_problem} with $\mathrm{Tmp}^{\mathrm{SS}}{ij}$.

\begin{proposition} \label{prop:ss_restriction}
    Let $v^{\mathrm{Trans}}, v^{\mathrm{SS}}$ be the objective value to Problem \eqref{eq:synthetic_problem} with \eqref{eq:synthetic_problem:temp} as $\mathrm{Tmp}^{\mathrm{Trans}}_{ij}$ and $\mathrm{Tmp}^{\mathrm{ss}}_{ij}$ respectively. Then, $v^{\mathrm{Trans}} \leq v^{\mathrm{SS}}$.
\end{proposition}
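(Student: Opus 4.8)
The plan is to prove the inequality by a standard relaxation argument: I will exhibit, for every point feasible for the steady-state problem, a corresponding point feasible for the transient problem that carries the \emph{same} generation profile $p^G$, hence the same cost. Since the objective $\sum_{t\in\sT}\sum_{g\in\sG} C_g(p^G_{g,t})$ depends only on $p^G$ and not on the temperature variables, this shows that the transient feasible region, projected onto the cost-relevant variables $(p^G, q^G, e, f, I^{\mathrm{re}}, I^{\mathrm{im}}, \iota)$, contains the steady-state one. Taking minima over the two sets then yields $v^{\mathrm{Trans}} \le v^{\mathrm{SS}}$.

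Concretely, I would fix a feasible solution of the steady-state problem and observe that the AC constraints $\mathrm{AC}_t$ and the ramping constraints $\mathrm{Rmp}_g$ involve only $(p^G, q^G, e, f, I^{\mathrm{re}}, I^{\mathrm{im}}, \iota)$ and are untouched by any change to $T$. The only modification needed is to replace the steady-state temperature trajectory $T_{ij,t} = s_{1,ij,t}$ by the genuine transient trajectory generated by the same current sequence $\iota_{ij,:}$ and the same admissible initial temperature $T_{0,ij}$, i.e.\ by iterating $T_{ij,t} = f_{W_{ij,t},\Delta}(\iota_{ij,t}, T_{ij,t-1})$. This keeps $p^G$ (and all other AC variables) fixed, so the objective value is preserved; it remains only to verify that the new trajectory satisfies the transient temperature cap $T_{ij,t} \le T^{\max}$ for all $t \in \sT$, so that $(\iota_{ij,:}, T_{ij,:}) \in \mathrm{Tmp}^{\mathrm{Trans}}_{ij}$.

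The key step is a per-interval sandwiching provided by the monotonicity result. On any dispatch interval, Proposition~\ref{prop:time-monotonicity} guarantees that the transient temperature moves monotonically from $T_{ij,t-1}$ toward the steady state $s_{1,ij,t}$ without overshooting it, so the end-of-interval value obeys $T_{ij,t} \le \max(T_{ij,t-1}, s_{1,ij,t})$. I then propagate this bound across periods by induction on $t$: assuming $T_{ij,t-1} \le T^{\max}$ and using steady-state feasibility $s_{1,ij,t} \le T^{\max}$, the sandwiching gives $T_{ij,t} \le T^{\max}$. The base case is the admissibility of the initial state $T_{0,ij} \le T^{\max}$, which holds because the line begins in a safe thermal condition.

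The main obstacle, and the only place where problem-specific structure enters, is making the per-interval comparison rigorous: I must be sure the transient temperature stays between its initial value and the steady-state value throughout the \emph{whole} interval (not merely in the $\tau\to\infty$ limit), so that its endpoint respects the bound. This is exactly what Proposition~\ref{prop:time-monotonicity} supplies, since monotone convergence to $s_{1,ij,t}$ precludes overshoot. Once this lemma is invoked, the induction and the resulting feasible-set inclusion are immediate, and the inequality $v^{\mathrm{Trans}} \le v^{\mathrm{SS}}$ follows.
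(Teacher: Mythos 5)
Your proposal is correct and follows essentially the same route as the paper: both construct a transient-feasible point from a steady-state-feasible one by keeping all AC variables (hence the cost) fixed, replacing the temperature trajectory with the transient one driven by the same currents, and then proving $T_{ij,t} \le T^{\max}$ by induction, where your single bound $T_{ij,t} \le \max(T_{ij,t-1}, s_{1,ij,t})$ is exactly the paper's two-case (heating/cooling) application of Proposition~\ref{prop:time-monotonicity}. No gaps; the arguments are interchangeable.
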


\begin{proof}
We prove that any feasible solution to the steady-state model yields a feasible solution to the transient model with the same objective value.

Let $x^{\mathrm{SS}} = (p^{G, SS}, q^{G, SS}, e^{\mathrm{SS}}, f^{\mathrm{SS}}, \iota^{\mathrm{SS}}, I^{\mathrm{re}}, I^{\mathrm{im}}, T^{\mathrm{SS}})$ be a feasible steady-state solution. Define  
$x^{\mathrm{Trans}} := (p^{G, SS}, q^{G, SS}, e^{\mathrm{SS}}, f^{\mathrm{SS}}, \iota^{\mathrm{SS}}, I^{\mathrm{re}}, I^{\mathrm{im}}, T'),$
where $T'$ is computed via~\eqref{eq:reform:K0}–\eqref{eq:reform:Tempsol}, using the same current values $\iota_{ij,t}^{\mathrm{SS}}$.

We show by induction on $t \in \{0\} \cup \sT$ that $T'_{ij, t} \leq T^{\max}$ for all lines $(i,j)$.
First, for $t=0$, $T'_{ij,0} = T_{ij,0}^{\mathrm{SS}} = T_0 \leq T^{\max}$.

Now, fix $t\geq 1$ and assume $T'_{ij,t-1} \leq T^{\max}$. Consider two cases:
\begin{itemize}
    \item If $T'_{ij,t-1} \leq T^{\mathrm{SS}}_{ij,t}$ (heating), then by Proposition \ref{prop:time-monotonicity}, $T'_{ij,t} \leq T^{\mathrm{SS}}_{ij,t} \leq T^{\max}$.
    \item If $T'_{ij,t-1} \geq T^{\mathrm{SS}}_{ij,t}$ (cooling), then again by Proposition \ref{prop:time-monotonicity} and the induction hypothesis, $T'_{ij,t} \leq T'_{ij,t-1} \leq T^{\max}.$
\end{itemize}

In both cases, $T'_{ij,t} \leq T^{\max}$. By induction, this holds for all $t$.
Hence, $x^{\mathrm{Trans}}$ is feasible in the transient model. Since the objective values match, the result follows.
\end{proof}
This result shows that the steady-state approximation used in previous DLR studies is a safe approximation of the line dynamics. As noted in Proposition \ref{prop:time-monotonicity}, the steady-state temperature computations decouple the time-steps: $(\iota, T) \mapsto f_{W_{ij,t{-}1}, \infty}(\iota, T)$ is a continuous increasing function in $\iota$, and is constant in $T$. We can therefore define the maximum steady-state current on a line $ij$ and time-period $t$ as:
\begin{align} \label{eq:Imax}
    \iota^{\max}_{ij, t}:= \frac{K_{4 \, ij, t} (T^{\max})^4 + K_{1 \, ij, t} T^{\max} - K_{0 \, ij, t}'}{r_{ij}},
\end{align}

and $(T_{ij, :}, \iota_{ij, :}) \in \mathrm{Tmp}^{\mathrm{ss}}_{ij}$ is equivalent to:
\begin{align}\label{eq:model:current_limit}
    \iota_{ij, t} \leq \iota^{\max}_{ij, t}, \quad& \forall t \in \sT.
\end{align}
As a result, we can equivalently impose current limits directly in the multi-period ACOPF model to enforce steady-state DLR.

\section{Decomposition Strategy and Bi-level ADMM}\label{sec:decomposition}
\noindent

In this section, we present our space-time decomposition and the bi-level ADMM algorithm used to solve our problem.
\subsection{Decomposition strategy}\label{subsec:decompositon}

Problem~\eqref{eq:synthetic_problem} involves several ACOPF and temperature dynamics equations, non-convex constraints that couple variables across both time and space. In each time period, the transmission constraints~\eqref{eq:ac:real_power_balance}–\eqref{eq:ac:im_power_balance} link generation, current, and voltage variables. Across time, each line is coupled by the transient temperature block $\mathrm{Tmp}^{\mathrm{Trans}}_{ij}$, while each generator is subject to the convex ramping constraint~\eqref{eq:ac:Ramping}. In contrast, the constraint sets $\mathrm{AC}_t$ and $\mathrm{AC}_{t'}$ are independent for distinct time steps $t \neq t'$. Likewise, the temperature and ramping constraints are separable across lines and generators, respectively.
As solving \eqref{eq:synthetic_problem} is intractable even for small networks, we propose a decomposition method that leverages parallelization within an ADMM framework.

Starting from formulation~\eqref{eq:synthetic_problem}, we introduce variables $\iota^{\mathrm{Tmp}}$, $\iota^{\mathrm{AC}}$, $p^{G, \mathrm{Rmp}}$, and $p^{G, \mathrm{AC}}$, which serve as local copies of the variables involved in the  $\mathrm{Tmp}$, $\mathrm{AC}$, and $\mathrm{Rmp}$ blocks, respectively. 
To enforce consistency across these blocks, we add the following consensus constraints:
\begin{equation}
    \iota^{\mathrm{AC}} = \iota^{\mathrm{Tmp}}, \quad 
    p^{G, \mathrm{AC}} = p^{G, \mathrm{Rmp}}. \label{eq:consensus}
\end{equation}
This yields the equivalent reformulation of \eqref{eq:synthetic_problem}:
\begin{subequations} \label{eq:model_with_slacks}
	\allowdisplaybreaks
	\begin{IEEEeqnarray}{lrl}
	\smashoperator[r]{\min_{\substack{%
	(p^{G,\mathrm{Rmp}},q^{G},p^{G,\mathrm{AC}})\in\mathbb{R}^{|\sG|\times|\sT|}\\
	(e,f)\in\mathbb{R}^{|\sN|\times|\sT|}\\
	(T,I^{\mathrm{re}},I^{\mathrm{im}},\iota^{\mathrm{Tmp}},\iota^{\mathrm{AC}})
			\in\mathbb{R}^{|\sB|\times|\sT|}}}}
	& &\mathclap{\quad 
	\sum_{t\in\sT}\sum_{g\in\sG} C_g\bigl(p^{G}_{g,t}\bigr)} 
	\IEEEeqnarraynumspace \IEEEyesnumber
	\label{eq:naive_ADMM_obj}\\[1ex]
	&\hphantom{\makebox[0.36\linewidth][l]{}}&\IEEEnonumber\\[-1.6ex]
	\text{s.t.}
	& \mathllap{(p^{G,\mathrm{AC}}_{:,t},q^{G}_{:,t},e_{:,t},f_{:,t}, \iota^{\mathrm{AC}}_{:,t}, I^{\mathrm{re}}_{:,t},
	I^{\mathrm{im}}_{:,t})} & \in \mathrm{AC}_{t}, \forall t\in\sT,
	\IEEEeqnarraynumspace \IEEEyesnumber
	\label{eq:ADMM:AC:2}\\
	& (T_{ij,:}, \iota^{\mathrm{Tmp}}_{ij,:})& 
	\in \mathrm{Tmp}_{ij}, \forall ij{\in}\sE,
	\IEEEeqnarraynumspace \IEEEyesnumber
	\label{eq:ADMM_inner:Tmp:2}\\
	& p^{G,\mathrm{Rmp}}_{g,:}& 
	\in \mathrm{Rmp}_{g}, \forall g\in\sG,
	\IEEEeqnarraynumspace \IEEEyesnumber
	\label{eq:ADMM_inner:Ramping:2}\\
    &\eqref{eq:consensus}. \IEEEnonumber
	%
	%
	\end{IEEEeqnarray}
\end{subequations}

We now introduce compact notations for the model variables. For any matrix \( z \in \R^{m \times n} \), let \( \tilde z \in \R^{mn} \) denote the column-wise vectorization, i.e., \(z_{i, j} = \tilde z_{n(j-1) + i}.\)

Define the temporally and spatially coupled variable vectors:
\[
\begin{aligned}
    x &:= (\tilde p^{G, AC}, \tilde \iota^{AC}, \tilde q^{G}, \tilde e, \tilde f, \tilde I^{\mathrm{Re}}, \tilde I^{\mathrm{Im}}) \in \R^{|\sT| \cdot (3|\sB| + 2|\sG| + 2|\sN|)}, \\
    y &:= (\tilde p^{G, \mathrm{Rmp}}, \tilde \iota^{\mathrm{Tmp}}, \tilde T^{\mathrm{Tmp}}) \in \R^{|\sT| \cdot (2|\sB| + |\sG|)}.
\end{aligned}
\]
Let \( x_t := (p^{G, AC}_{:, t}, \iota^{AC}_{:, t}, q^{G}_{:, t}, e_{:, t}, f_{:, t}, I^{\mathrm{Re}}_{:, t}, I^{\mathrm{Im}}_{:, t}) \) denote the temporal slice of \( x \), and define the spatial slice of \( y \) as:
\[
y_l := 
\begin{cases}
    (\iota^{\mathrm{Tmp}}_{l, :},\ T_{l, :}) & \text{if } l \in \sE, \\
    (p^{G,\mathrm{Rmp}}_{l, :}) & \text{if } l \in \sG.
\end{cases}
\]

Let \( \mathrm{Dev}_l \) be the set of device-level constraints: \( \mathrm{Tmp}_l \) if \( l \in \sE \), and \( \mathrm{Rmp}_l \) if \( l \in \sG \).
We now introduce component selection matrices $A, B$ as:
\begin{align*}
    A &= \begin{pmatrix}
\begin{matrix} -\mathbb I_{|\sT| (|\sG| + |\sE|)} \end{matrix} & \mathbf 0
\end{pmatrix} \in \R^{|\sT|  (|\sG| +|\sE|) {\times}|\sT|  (2 |\sG| + 3  |\sB| + 2 |\sN|) }, \\
    B &= \begin{pmatrix}
    \begin{matrix} \mathbb I_{|\sT|(|\sG| + |\sE|)} \end{matrix} & \mathbf 0
    \end{pmatrix} \in \R^{(|\sG| + |\sE| ) |\sT| \times(|\sG| + 2 |\sB| ) |\sT|} .
\end{align*}

Problem~\eqref{eq:model_with_slacks} can be equivalently reformulated as:

\begin{subequations}\label{eq:compact_original_separable_problem}
\begin{align}
     \underset{
            \substack{x, y}
        }{\text{minimize}}\quad & C(x)\\
        \text{s.t.} \quad  & x_t \in \mathrm{AC}_t,& \forall t \in \sT \\ 
        & y_l \in\mathrm{Dev}_l, & \forall l \in \sE \cup \sG \\
        &  Ax + By=0. \label{eq:compact_original_problem_consensus}
\end{align}
\end{subequations}

 However, as noted in~\cite{xu2020us}, applying ADMM to nonconvex problems does not guarantee convergence when the final block of variables is constrained. We will instead apply ADMM to an augmented Lagrangian relaxation of \eqref{eq:model_with_slacks}.
 We introduce the slack variable vector $u$, and rewrite \eqref{eq:compact_original_problem_consensus} as:
 \begin{align}
     Ax + By + u = 0 \label{eq:consensus_slack}\\
     u=0 \label{eq:zeros_slacks}
 \end{align}

We now relax \eqref{eq:zeros_slacks} in the augmented Lagrangian framework. Introduce dual variable $w$ and penalty parameter $\theta$, to obtain the following augmented Lagrangian relaxation of~\eqref{eq:compact_original_separable_problem}:
\begin{subequations}\label{eq:compact_augmented_lagrangian}
	\begin{align}
    \underset{
            \substack{x, y, u}
        }{\text{minimize}}\quad & C(x) + {w}^\top u + \frac{\theta}{2} \norm{u}_2^2 \\
        \text{s.t.} \quad  & x_t \in \mathrm{AC}_t,& \forall t \in \sT, \\ 
        & y_l \in \mathrm{Dev}_{l}, & \forall l \in \sE \cup \sG, \\
        &\eqref{eq:consensus_slack}. \notag 
\end{align}
\end{subequations}
Problem~\eqref{eq:compact_augmented_lagrangian} is separable into three blocks of variables $x, y$ and $u$, where the third block in $u$ is unconstrained with a smooth objective $\theta/2\|u\|^2_2$, facilitating the convergence of the ADMM algorithm. We define $p := Ax + By + u$, and we introduce the multiplier $v$ and penalty $\rho$ to relax~\eqref{eq:compact_original_problem_consensus} in the augmented Lagrangian framework, forming the objective: 
\[
   \mathcal{L}(x, y, u, v, w) =  C(x) + {w}^\top u + \frac{\theta}{2} \norm{u}_2^2 + {v}^\top p +\frac{\rho}{2} \norm{p}_2^2. 
\]


\subsection{Bi-level ADMM Algorithm}\label{sec:admm}
In this section, we present a bi-level algorithm used to solve problem~\eqref{eq:compact_original_separable_problem}.
Throughout this section, we adopt the definitions of approximate stationarity as introduced in \cite[Def 1 and 2]{gholami_admm-based_2023}.

We begin with the inner-level ADMM algorithm, which finds an $\epsilon$-stationary point of~\eqref{eq:compact_augmented_lagrangian}, given fixed outer-level variables $w$ and $\theta$.
Let $(r)$ denote the inner iteration index. The variable $x^{k,(r)}$ refers to the state at the $r^{\text{th}}$ inner iteration within the $k^{\text{th}}$ outer iteration. Variables fixed during the inner loop are denoted by $x^k$ at the start of outer iteration $k$, and we omit the superscript $k$ when unambiguous. Upon completion of the inner loop, the resulting solutions are labeled $x^{(k)}, y^{(k)}, u^{(k)}, v^{(k)}$. 
The inner loop solves the augmented Lagrangian relaxation~\eqref{eq:compact_augmented_lagrangian} using a 3-block ADMM scheme, decomposing the problem into subproblems at each inner iteration $r$.

First, optimize over the ACOPF variables independently and in parallel for each time step $t \in \sT$ by solving:
\begin{align*}
x^{(r)}_{:, t} \in \arg\min_{x_{:, t} \in \mathrm{AC}t}
\mathcal{L}(x_{:, t}, y^{(r-1)}, u^{(r-1)}, v^{(r-1)}, w), \forall t \in \sT.
\end{align*}
We only require a stationary point to guarantee convergence. Next, optimize over the temporally coupled variables independently and in parallel for each line and generator $l \in \sE \cup \sG$:
\begin{align*}
y^{(r)}_{l, :} \in \arg\min_{y \in \mathrm{Dev}_{l}}
\mathcal{L}(x^{(r)}, y_{l, :}, u^{(r-1)}, v^{(r-1)}, w), \forall l \in \sE \cup \sG.
\end{align*}
Then, compute slack variables to minimize primal infeasibility:
\begin{align*}
u^{(r)} &= \frac{-v^{(r-1)} - w^{(k)} - \rho (A x^{(r)} + B y^{(r)})}{\rho^{(k)} + \theta^{k}}.
\end{align*}
Finally, perform a dual ascent step $v^{(r)} = v^{(r-1)} + \rho^{(k)} p^{(r)}$.

The complete inner ADMM procedure is summarized in Algorithm \ref{alg:inner-iteration}. The algorithm terminates when the primal residual $\norm{p^{(r)}}_2$ falls below a tolerance threshold.
\begin{algorithm}[t]
\caption{ADMM with 3 blocks for the $k$-th outer level iterate}
\begin{algorithmic}[1]  
\REQUIRE Initial primal variables
         $x^{(0)}_{t}$, $y^{(0)}_{ij}$, $y^{(0)}_{g}$, $u^{(0)}$;
         duals $v^{(0)}$, $w^{(k)}$ with
         $w^{(k)} + \theta^{k}u^{(0)} = v^{(0)}$;
         penalty $\rho^{(k)} = 2\theta^{k}$;
         $d = \dim(Ax)$
\ENSURE  Sequences
         $x^{(r)}_{t}$, $y^{(r)}_{ij}$, $y^{(r)}_{g}$, $u^{(r)}$
\STATE $r \gets 1$
\WHILE{$\lVert x^{(r)} - y^{(r)} - u^{(r)} \rVert_{2}
       \le \max\!\bigl(\varepsilon,\;\sqrt{d}/(k\sqrt{\rho})\bigr)$}
  \FORALL{$t \in \sT$}
    \STATE
      $x^{(r)}_{t} \in
        \displaystyle\arg\min_{x_{t} \in \mathrm{AC}_{t}}
        \mathcal{L}\bigl(x_{t}, y^{(r-1)}, u^{(r-1)},
                         v^{(r-1)}, w^{(k)}\bigr)$
  \ENDFOR
  \FORALL{$ij \in \sE$}
    \STATE
      $y^{(r)}_{ij} \in
        \displaystyle\arg\min_{y_{ij} \in \mathrm{Temp}_{ij}}
        \mathcal{L}\bigl(x^{(r)}, y_{ij}, u^{(r-1)},
                         v^{(r-1)}, w^{(k)}\bigr)$
  \ENDFOR
  \FORALL{$g \in \sG$}
    \STATE
      $y^{(r)}_{g} \in
        \displaystyle\arg\min_{y_{g} \in \mathrm{Rmp}_{g}}
        \mathcal{L}\bigl(x^{(r)}, y_{g}, u^{(r-1)},
                         v^{(r-1)}, w^{(k)}\bigr)$
  \ENDFOR
  \STATE
    $u^{(r)} \in
      \displaystyle\arg\min_{u}
      \mathcal{L}\bigl(x^{(r)}, y^{(r)}, u,
                       v^{(r-1)}, w^{(k)}\bigr)$
  \STATE
    $v^{(r)} \gets
      v^{(r-1)} + \rho^{(k)}
        \bigl(B y^{(r)} + A x^{(r)} + u^{(r)}\bigr)$
  \STATE $r \gets r + 1$
\ENDWHILE
\end{algorithmic}
\label{alg:inner-iteration}
\end{algorithm}

After the inner-level ADMM finds an $\epsilon$-stationary point of~\eqref{eq:compact_augmented_lagrangian}, we exit the inner loop and perform a dual ascent step on the outer level variables $w^k$ as:
\begin{align*}
   \hat w^{k+1}= w^k + \theta^k u^k, \quad  w^{k+1} = Proj_{[\underline w, \overline w]}(\hat w^{k+1}).
\end{align*}
The projection step guarantees convergence of the algorithm (see Proposition \ref{prop:outer_convergence}) by guaranteeing boundedness of the dual multipliers. However, in our experiments, it is not needed. 

Then, increase the penalty parameter $\theta^k$ if sufficient slack reduction $u$ has been made during the previous algorithm iteration. For given scalars $\omega, \gamma$, we update $\theta^k$ as:
\begin{align*}
    \theta^{k+1} = \begin{cases}
        \theta^k & \text{if } \norm{u^k}_2 \leq \omega \norm{u^{k-1}}_2, \\
        \gamma \cdot \theta^{k+1} & \text{otherwise.}
    \end{cases}
\end{align*}

We present the outer iteration algorithm in Algorithm \ref{alg:outer-iteration}.
\begin{algorithm}[t]
\caption{Bi-level ADMM algorithm}

\begin{algorithmic}[1]
  \REQUIRE Parameters $\gamma$, $\theta^{0}$, $\omega$; initial primal
           variables $x^{(0)}_{t}\!\!,y^{(0)}_{ij}\!\!,y^{(0)}_{g}\!\!,u^{(0)}$;
           duals $v^{(0)}\!\!, w^{(0)}$ s.t.
           $w^{(0)} {+} \theta^{0}u^{(0)} {=} v^{(0)}$.
  \ENSURE  Stationary point
           $x^{(K)}_{t}\!\!,y^{(K)}_{ij}\!\!,y^{(K)}_{g}\!\!,u^{(K)}$
           of problem \eqref{eq:compact_augmented_lagrangian}
  \STATE $k \leftarrow 1$
  \WHILE{$\lVert Ax^{(k)} + By^{(k)} \rVert_{2} \le \varepsilon$}
    \STATE Run Alg.~\ref{alg:inner-iteration} on
           $(x^{(k-1)}_{t}\!\!, y^{(k-1)}_{ij}\!\!, y^{(k-1)}_{g}\!\!, u^{(k-1)}\!\!, v^{(k-1)}\!\!, w^{(k-1)})$
           s.t.\ $w^{(k-1)} + \theta^{k-1}u^{(k-1)} = v^{(k-1)}$
    \STATE $w^{(k)} \gets w^{(k-1)} + \theta^{k-1}u^{(k)}$
    \IF{$\lVert u^{(k)} \rVert \ge \omega\,\lVert u^{(k-1)} \rVert$}
      \STATE $\theta^{k} \gets \gamma\,\theta^{k-1}$
    \ENDIF
    \STATE $k \gets k + 1$
  \ENDWHILE
  \RETURN $(x^{(K)}_{t},\,y^{(K)}_{ij},\,y^{(K)}_{g},\,u^{(K)})$
\end{algorithmic}
\label{alg:outer-iteration}
\end{algorithm}

\vspace{-1em}
\section{Convergence of the Algorithm}\label{sec:convergence}
\noindent

We now prove convergence of the bi‑level ADMM scheme under mild
assumptions, matching the rates of \(O(1/\epsilon^{4})\) and \(O(1/\epsilon^{3})\) proven in \cite{sun_two-level_2023}. 
\subsection{Inner Convergence}
\label{sec:inner_convergence}

Algorithm~\ref{alg:inner-iteration}, adapted from \cite{sun_two-level_2023}, provides an \(O(1/\epsilon^{2})\) rate when the second block is convex.
Although the ACOPF and temperature blocks are nonconvex, the
temperature dynamics are almost linear, deviating only through a
quartic perturbation:
\[
  \frac{dT}{d\tau}
  = \underbrace{K_{0}'-K_{1}T}_{\text{linear}}
  \;-\;
  \underbrace{K_{4}T^{4}}_{\text{quartic}}
  \;+\;
  \underbrace{r'\iota}_{\text{linear input}},
\]
with constants as defined in Theorem \ref{thm:tempsol}. Let $\iota_{<t}$ denote $\iota_1, \ldots \iota_{t-1}$. We define the multi-step solutions \(\{\Phi^{t}\}_{t \in \sT}\) as:
\begin{align*}
\Phi^1(T_0, \iota_1) &= f_{W_1,\Delta}(T_0, \iota_1),\\[2pt]
\Phi^t(T_0, \iota_{<t}) &= f_{W_t,\Delta}(\Phi^{t-1}(T_0, \iota_{<t-1}), \iota_{t-1}),
\text{for } t \in \sT'.
\end{align*}

With this representation, the temperature model \eqref{eq:trans_model} writes:
\begin{align*}
  \Phi^{t}(T_{ij,0},\iota_{ij,1},\ldots,\iota_{ij,t-1})\le T^{\max},
  \qquad \forall t\in \sT.
\end{align*}

In Lemma \ref{thm:Hessian}, we prove that the functions $\Phi^t$ are close to linear in the sense that their curvature is low. 

\begin{lemma}[Uniform second‑order bound]%
\label{thm:Hessian}
Fix $T_0,K_{1}$, $K_{4},r>0$ and assume \(T(t)\le T^{\max}\).
With
\begin{gather*}
\beta:=12K_{4}T^{\max, 2},\;
\underline\kappa:=K_{1}+4K_{4}T^{\max, 3},\;
G_{\Delta}:=e^{-K_{1}\Delta},\\
M_{\Delta}:=\frac{\beta}{\underline\kappa}
            \Bigl(1+\frac{r^{2}}{K_{1}^{2}}\Bigr)
            \bigl(1-e^{-\underline\kappa\Delta}\bigr),
\end{gather*}
the flow Hessian satisfies for all \(t\ge1\)
\[
  \bigl\|\nabla^{2}\Phi^{t}(T_{0},\iota_{1}, \ldots, \iota_{t-1})\bigr\|_{\mathrm{op}}
  \le M_{\Delta}
  +\frac{M_{\Delta}\bigl(1+\tfrac{r}{K_{1}}\bigr)^{2}}%
        {1-e^{-K_{1}\Delta}}.
\]
\end{lemma}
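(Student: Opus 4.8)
The plan is to view the single-step map $f_{W,\Delta}$ as the time-$\Delta$ solution operator of the scalar ODE $\dot T = F(T,\iota)$ with $F(T,\iota) = K_0' - K_1 T - K_4 T^4 + r\iota$, and to propagate bounds on its first and second derivatives through the associated variational (sensitivity) equations. Differentiating the ODE in the data $z=(T_0,\iota)$, the first-order sensitivity $\partial_{z_i}T$ solves the linear equation $\tfrac{d}{d\tau}(\partial_{z_i}T) = F_T(T)\,\partial_{z_i}T + F_\iota\,\partial_{z_i}\iota$, and the second-order sensitivity solves $\tfrac{d}{d\tau}(\partial^2_{z_iz_j}T) = F_T(T)\,\partial^2_{z_iz_j}T + F_{TT}(T)\,(\partial_{z_i}T)(\partial_{z_j}T)$. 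Two structural facts drive everything: $F_T(T) = -(K_1 + 4K_4 T^3) =: -\kappa(T)$ is uniformly negative with $K_1 \le \kappa(T) \le \underline\kappa$ on $[0,T^{\max}]$, giving exponential contraction of sensitivities; and $F$ is affine in $\iota$, so all derivatives of $F$ involving $\iota$ past first order vanish, leaving only the curvature $F_{TT}(T) = -12K_4 T^2$, bounded in magnitude by $\beta$, as the sole source term in the second-order equation.

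First I would bound the single-step derivatives. Integrating the first-order equation with the factor $\exp(-\int_0^\tau \kappa)$ and using $\kappa \ge K_1$ yields the contraction $\partial_{T_0} f_{W,\Delta} \le e^{-K_1\Delta} = G_\Delta$ and the input bound $\partial_\iota f_{W,\Delta} \le \tfrac{r}{K_1}(1-G_\Delta)$, so that the single-step gradient is controlled by $1 + \tfrac{r}{K_1}$. For the Hessian, Duhamel's formula applied to the second-order equation gives $\partial^2_{z_iz_j}T(\Delta) = \int_0^\Delta \exp\!\big(-\!\int_s^\Delta \kappa\big)\,F_{TT}(T(s))\,(\partial_{z_i}T)(\partial_{z_j}T)\,ds$; bounding $|F_{TT}| \le \beta$, the product of first-order sensitivities by $1 + \tfrac{r^2}{K_1^2}$, and the exponential kernel through the uniform bounds on $\kappa$, the time integral collapses to the factor appearing in $M_\Delta$, establishing $\|\nabla^2 f_{W,\Delta}\|_{\mathrm{op}} \le M_\Delta$.

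Finally I would lift these single-step estimates to $\Phi^t$ via the composition $\Phi^t(z) = f_{W_t,\Delta}\big(\Phi^{t-1}(z_{<t-1}),\iota_{t-1}\big)$ and the chain rule for Hessians. Writing $J$ for the Jacobian of $z \mapsto (\Phi^{t-1}(z_{<t-1}),\iota_{t-1})$, one has $\nabla^2\Phi^t = (\partial_T f_{W_t,\Delta})\,\nabla^2\Phi^{t-1} + \nabla^2 f_{W_t,\Delta}[J,J]$. The first term contracts the inherited Hessian by $\partial_T f_{W_t,\Delta} \le G_\Delta$, while the second contributes at most $M_\Delta\|J\|^2$, with $\|J\|$ bounded by $1+\tfrac{r}{K_1}$, using that the accumulated multi-step gradient stays bounded by $1+\tfrac{r}{K_1}$ (the per-step input contributions form a geometric series with ratio $G_\Delta$ summing to $\tfrac{r}{K_1}$). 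Setting $H_t := \|\nabla^2\Phi^t\|_{\mathrm{op}}$, this yields the linear recursion $H_t \le G_\Delta H_{t-1} + M_\Delta(1+\tfrac{r}{K_1})^2$ with base case $H_1 \le M_\Delta$; unrolling it and summing the geometric series with ratio $G_\Delta = e^{-K_1\Delta} < 1$ gives $H_t \le M_\Delta + \tfrac{M_\Delta(1+r/K_1)^2}{1-e^{-K_1\Delta}}$, the claimed uniform bound.

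The main obstacle is the uniform-in-$t$ control of the accumulated sensitivities, not the individual differentiations. The recursion only closes because the first-order input sensitivities from successive steps do not accumulate linearly but form a contracting geometric series, keeping $\|J\|$ and hence the per-step curvature injection $t$-independent; the same contraction ensures the inherited-Hessian term decays rather than compounds. Making these constants line up exactly with $\beta$, $\underline\kappa$, $K_1$, and the exponential integral factors — so that the bound is genuinely uniform rather than growing with the horizon — is the delicate bookkeeping; the remaining steps are routine Gr\"onwall/Duhamel estimates and the chain rule.
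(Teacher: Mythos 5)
Your proposal is correct and follows essentially the same route as the paper's proof: single-step bounds via the first- and second-order variational ODEs (Gr\"onwall for the Jacobian, Duhamel with the curvature bound $|F_{TT}|\le\beta$ for the Hessian, yielding $M_\Delta$), then the Hessian chain rule giving the recursion $H_t \le G_\Delta H_{t-1} + M_\Delta\bigl(1+\tfrac{r}{K_1}\bigr)^2$, closed by the geometric-series bound on the accumulated input sensitivities. The paper's appendix proof is this argument with the same constants, so there is nothing further to reconcile.
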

\begin{proof}
    The proof is presented in Appendix \ref{app:proof_flow_map_1}.
\end{proof}
We now prove that this is a sufficient condition to recover quadratic descent, under mild conditions. 
\begin{proposition}[Descent in temperature update]%
\label{prop:sufficient_descent}
Let \(\Lambda>\sum_{i=1}^{|\mathcal{T}|}\lambda_{i}\) denote an upper
bound on the KKT multipliers of each $\mathrm{Tmp}_{ij}$ subproblem and choose
\(\rho^{(r-1)}\ge2C_{\Delta}\Lambda\).
If \(y^{(r)}\) is a stationary point for \(\mathcal{L}\), then
\begin{align*}
  \mathcal{L}(x^{(r)},y^{(r-1)},\ldots)
 -&\mathcal{L}(x^{(r)},y^{(r)},\ldots)
 \\&\ge\
 \frac{\rho^{(r-1)}}{4}
 \bigl\|B\,y^{(r)}-B\,y^{(r-1)}\bigr\|^{2}.
\end{align*}
\end{proposition}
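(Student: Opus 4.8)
The plan is to exploit that, among all terms of $\mathcal{L}$, only the penalty terms $v^{\top}p + \tfrac{\rho}{2}\|p\|^2$ with $p = Ax^{(r)} + By + u^{(r-1)}$ depend on the minimization variable $y$ during the $y$-update, and they do so only through the image $By$ (the terms $C(x^{(r)})$, $w^{\top}u^{(r-1)}$, $\tfrac{\theta}{2}\|u^{(r-1)}\|^2$ are frozen). Writing this $y$-dependent part as $\psi(By)$, its Hessian in the variable $z = By$ is exactly $\rho I$, so $\psi$ is $\rho$-strongly convex along $\mathrm{range}(B)$, which is a coordinate subspace because $B$ is a selection matrix. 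First I would record the strong-convexity inequality between the two iterates,
\[
\mathcal{L}(x^{(r)},y^{(r-1)},\ldots) - \mathcal{L}(x^{(r)},y^{(r)},\ldots) \ge \langle\nabla\psi(By^{(r)}),\,B(y^{(r-1)}-y^{(r)})\rangle + \frac{\rho}{2}\bigl\|By^{(r-1)} - By^{(r)}\bigr\|^2,
\]
which reduces the claim to showing that the cross term is no more negative than $-\tfrac{\rho}{4}\|By^{(r-1)}-By^{(r)}\|^2$.

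Next I would rewrite that cross term using the stationarity of $y^{(r)}$. Working in the eliminated flow-map representation of Section~\ref{sec:inner_convergence}, each temperature block $\mathrm{Tmp}_{ij}$ becomes the pure inequality system $g_t(\iota) := \Phi^t(T_{ij,0},\iota_{<t}) - T^{\max} \le 0$ in the copy variable $\iota^{\mathrm{Tmp}}_{ij,:}$, so $By$ coincides with the block decision variable, and the ramp blocks contribute only linear, curvature-free constraints. The KKT condition at the stationary point reads $B^{\top}\nabla\psi(By^{(r)}) = -\sum_i\lambda_i\nabla g_i(y^{(r)})$ with $\lambda_i\ge0$ and complementary slackness, so the cross term equals $-\sum_i\lambda_i\langle\nabla g_i(y^{(r)}),\,y^{(r-1)}-y^{(r)}\rangle$, in which inactive constraints drop out and every surviving $g_i$ satisfies $g_i(y^{(r)})=0$.

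The main work—and the step where Lemma~\ref{thm:Hessian} enters—is bounding each surviving inner product. A second-order Taylor expansion of $g_i$ about $y^{(r)}$, combined with $g_i(y^{(r)})=0$, feasibility $g_i(y^{(r-1)})\le0$, and the uniform bound $\|\nabla^2 g_i\|_{\mathrm{op}} = \|\nabla^2\Phi^{t_i}\|_{\mathrm{op}} \le C_\Delta$, yields $\langle\nabla g_i(y^{(r)}),\,y^{(r-1)}-y^{(r)}\rangle \le \tfrac12 C_\Delta\|y^{(r-1)}-y^{(r)}\|^2$. Summing against the nonnegative multipliers and using $\sum_i\lambda_i < \Lambda$ bounds the cross term below by $-\tfrac12 C_\Delta\Lambda\|By^{(r-1)}-By^{(r)}\|^2$; substituting into the strong-convexity inequality leaves a coefficient $\tfrac{\rho}{2}-\tfrac{C_\Delta\Lambda}{2}$, which is $\ge\tfrac{\rho}{4}$ exactly when $\rho^{(r-1)}\ge 2C_\Delta\Lambda$, giving the claim.

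I expect the main obstacle to be the bookkeeping that converts the curvature remainder $\|y^{(r-1)}-y^{(r)}\|^2$ into the required $\|By^{(r-1)}-By^{(r)}\|^2$. This is immediate in the eliminated representation, where the temperature decision variable is exactly $By$; if instead the slack temperature variables $T$ are retained, one must invoke the Lipschitz dependence of $T$ on $\iota$ through the flow map and absorb the resulting factor into $C_\Delta$. Some care is also needed with the sign conventions (the feasibility direction and nonnegativity of the multipliers) and with verifying that the convex ramp blocks, having zero constraint curvature, only improve the bound.
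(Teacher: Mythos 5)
Your proof is correct and takes essentially the same route as the paper's: the $\rho$-strong convexity of the penalty in $By$ (the paper writes this as an explicit algebraic identity), KKT stationarity of $y^{(r)}$, a second-order Taylor bound on the active constraints $\Phi^t \le T^{\max}$ using the Hessian bound $C_\Delta$ from Lemma~\ref{thm:Hessian} together with feasibility of $y^{(r-1)}$, the observation that the convex ramp blocks contribute a nonnegative cross term, and the same $\tfrac{\rho}{2}-\tfrac{C_\Delta\Lambda}{2}\ge\tfrac{\rho}{4}$ bookkeeping. The only element the paper includes that you pass over is an explicit verification of LICQ (the Jacobian $[\nabla\Phi^{t}(y)]_{t}$ is lower-triangular with nonzero diagonal, so its columns are linearly independent), which is what licenses the existence of the KKT multipliers $\lambda_t$ that your argument invokes.
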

\begin{proof}
    The proof is presented in Appendix \ref{app:sufficient_descent}.
\end{proof}
Finally, we prove the convergence of the ADMM algorithm:
\begin{proposition}[Inner convergence rate]%
\label{prop:inner_convergence}
Assume that for every inner iteration \(r\),
\(
  \mathcal{L}(x^{(r)},y^{(r-1)},\ldots)
 \le
  \mathcal{L}(x^{(r-1)},y^{(r-1)},\ldots),
\) and that $x^{(r)}$ is a stationary point of the Lagrangian. 
Then, under the hypothesis of Proposition~\ref{prop:sufficient_descent}, Algorithm~\ref{alg:inner-iteration} attains an $\epsilon$‑stationary point of \eqref{eq:compact_original_separable_problem} in \(O(1/\epsilon^{2})\) iterations.
\end{proposition}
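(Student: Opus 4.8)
The plan is to follow the two-level ADMM analysis of \cite{sun_two-level_2023}, whose core is a monotone sufficient-descent argument on the augmented Lagrangian $\mathcal{L}$ of \eqref{eq:compact_augmented_lagrangian}, combined with a lower bound and a telescoping sum. The crucial structural fact I would establish first is that the $u$-minimization and the subsequent dual step jointly pin the multiplier to the smooth block: writing the stationarity condition $w^{(k)} + \theta^k u^{(r)} + v^{(r-1)} + \rho^{(k)} p^{(r)} = 0$ of the $u$-update and substituting the dual step $v^{(r)} = v^{(r-1)} + \rho^{(k)} p^{(r)}$ yields $v^{(r)} + w^{(k)} + \theta^k u^{(r)} = 0$. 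Consequently $v^{(r)} - v^{(r-1)} = -\theta^k (u^{(r)} - u^{(r-1)})$ and $p^{(r)} = \tfrac{1}{\rho^{(k)}}(v^{(r)} - v^{(r-1)})$, so the two quantities I must control — the primal residual $\norm{p^{(r)}}$ and the dual movement — are both governed by $\norm{u^{(r)} - u^{(r-1)}}$.

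Second, I would assemble the per-iteration decrease of $\mathcal{L}$ across the four updates. The $x$-step is assumed non-increasing and stationary (hypothesis); the $y$-step gives the quadratic decrease $\tfrac{\rho^{(r-1)}}{4}\norm{By^{(r)}-By^{(r-1)}}^2$ of Proposition~\ref{prop:sufficient_descent}, which is exactly where the low curvature of the flow map (Lemma~\ref{thm:Hessian}) is spent to overcome nonconvexity of the temperature block; the $u$-step decreases $\mathcal{L}$ by at least $\tfrac{\rho^{(k)}+\theta^k}{2}\norm{u^{(r)}-u^{(r-1)}}^2$ since $u \mapsto \mathcal{L}$ is $(\rho^{(k)}+\theta^k)$-strongly convex; and the dual step increases $\mathcal{L}$ by $(v^{(r)}-v^{(r-1)})^\top p^{(r)} = \tfrac{1}{\rho^{(k)}}\norm{v^{(r)}-v^{(r-1)}}^2 = \tfrac{(\theta^k)^2}{\rho^{(k)}}\norm{u^{(r)}-u^{(r-1)}}^2$. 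Adding these and using the algorithmic choice $\rho^{(k)} = 2\theta^k$ (so that $\tfrac{\rho^{(k)}+\theta^k}{2} - \tfrac{(\theta^k)^2}{\rho^{(k)}} = \theta^k > 0$) gives, with $\mathcal{L}^{(r)} := \mathcal{L}(x^{(r)},y^{(r)},u^{(r)},v^{(r)})$, the net decrease
\[
\mathcal{L}^{(r-1)} - \mathcal{L}^{(r)} \ \ge\ \tfrac{\rho^{(r-1)}}{4}\norm{By^{(r)}-By^{(r-1)}}^2 + \theta^k \norm{u^{(r)}-u^{(r-1)}}^2 \ \ge\ 0.
\]

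Third, I would show $\mathcal{L}^{(r)}$ is bounded below along the trajectory and telescope. The device sets $\mathrm{AC}_t$ and $\mathrm{Dev}_l$ are compact (box limits on voltage, power, and temperature), so $x^{(r)},y^{(r)}$ and hence $Ax^{(r)}+By^{(r)}$ stay bounded and $C(x^{(r)})$ is bounded; the recursion $(\theta^k+\rho^{(k)})u^{(r)} = \theta^k u^{(r-1)} - \rho^{(k)}(Ax^{(r)}+By^{(r)})$ obtained from the $u$-update and the structural identity is a contraction in $u^{(r)}$, so $u^{(r)}$ and $v^{(r)} = -(w^{(k)}+\theta^k u^{(r)})$ remain bounded, making $\mathcal{L}^{(r)}$ bounded below by some $\mathcal{L}^\star$. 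Summing the descent inequality yields $\sum_r \bigl(\norm{By^{(r)}-By^{(r-1)}}^2 + \norm{u^{(r)}-u^{(r-1)}}^2\bigr) \le \tfrac{1}{\min(\rho^{(r-1)}/4,\,\theta^k)}(\mathcal{L}^{(0)} - \mathcal{L}^\star) < \infty$, so after $R$ iterations some iterate has $\norm{By^{(r)}-By^{(r-1)}}^2 + \norm{u^{(r)}-u^{(r-1)}}^2 = O(1/R)$. Translating to $\epsilon$-stationarity: primal feasibility follows from $\norm{p^{(r)}} = \tfrac{\theta^k}{\rho^{(k)}}\norm{u^{(r)}-u^{(r-1)}} = O(1/\sqrt{R})$; the $u$-stationarity $\nabla_u \mathcal{L}(\cdot,v^{(r)}) = w^{(k)} + \theta^k u^{(r)} + v^{(r)} = 0$ is exact by the structural identity; and the residuals in the $x$- and $y$-stationarity conditions, which hold exactly against the stale coupling iterates $y^{(r-1)},u^{(r-1)},v^{(r-1)}$, are bounded by $\norm{y^{(r)}-y^{(r-1)}}$, $\norm{u^{(r)}-u^{(r-1)}}$, $\norm{v^{(r)}-v^{(r-1)}}$ via Lipschitz continuity of $\nabla\mathcal{L}$ on the compact domain, hence are $O(1/\sqrt{R})$. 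Requiring $O(1/\sqrt{R}) \le \epsilon$ gives the claimed $O(1/\epsilon^2)$ iteration complexity.

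I expect the main obstacle to be the descent bookkeeping of the second step — specifically, guaranteeing that the nonconvex $y$-block still yields a genuine quadratic decrease. This is exactly what Proposition~\ref{prop:sufficient_descent} provides, and it is available only because Lemma~\ref{thm:Hessian} caps the curvature of $\Phi^t$ so that a penalty $\rho^{(r-1)} \ge 2 C_\Delta \Lambda$ convexifies the augmented subproblem; the delicate point is checking that this penalty schedule is simultaneously compatible with the net-descent inequality, where the same $\rho^{(k)} = 2\theta^k$ must also dominate the dual-ascent term. A secondary technical concern is making the lower bound on $\mathcal{L}^{(r)}$ rigorous without circularity, for which the compactness of the device feasible sets is essential.
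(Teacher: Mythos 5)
Your proposal is correct and takes essentially the same route as the paper: the paper's entire proof is a one-line appeal to Theorem 3 of \cite{gholami_admm-based_2023}, whose internal machinery is exactly what you reconstruct---the identity $v^{(r)}=-(w^{(k)}+\theta^{k}u^{(r)})$ tying dual movement to slack movement, the per-iteration descent accounting in which Proposition~\ref{prop:sufficient_descent} supplies the quadratic decrease for the nonconvex $y$-block and the choice $\rho^{(k)}=2\theta^{k}$ lets the strongly convex $u$-step absorb the dual-ascent increase, and the telescoping to $O(1/\epsilon^{2})$. In short, you prove the theorem that the paper merely cites, invoking the assumed $x$-descent/stationarity and Proposition~\ref{prop:sufficient_descent} at precisely the points where the paper verifies the cited theorem's hypotheses.
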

\begin{proof}
    We satisfy the hypothesis of Theorem 3 in \cite{gholami_admm-based_2023}. Therefore, the conclusion holds. 
\end{proof}

\subsection{Global Convergence}\label{sec:global}
We now state Proposition~\ref{prop:outer_convergence}, which establishes convergence of the bi‑level ADMM algorithm for both transient and steady‑state line temperature dynamics.
\begin{proposition}[Outer convergence rate]%
\label{prop:outer_convergence}
Algorithm~\ref{alg:outer-iteration} reaches an $\epsilon$‑stationary point of \eqref{eq:compact_original_separable_problem} in at most \(O(1/\epsilon^{4})\) iterations. If the (unprojected) outer dual sequence is bounded (\(\|\hat w^{k}\|\le W\)), the rate improves to \(O(1/\epsilon^{3})\).
\end{proposition}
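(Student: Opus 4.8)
The plan is to reduce the claim to the two‑level convergence theorem of \cite{sun_two-level_2023}, by checking that the augmented‑Lagrangian reformulation \eqref{eq:compact_augmented_lagrangian} together with the outer scheme of Algorithm~\ref{alg:outer-iteration} meets the structural hypotheses of that framework. First I would collect the ingredients already established. The cost $C(x)$ is continuous and bounded below on the feasible set; the constraint sets $\mathrm{AC}_t$ and $\mathrm{Dev}_l$ are closed; the selection matrices $A,B$ are each a signed identity padded with zeros, hence full row rank; and, decisively, the slack block $u$ is \emph{unconstrained}, enters the coupling $Ax+By+u=0$ with an identity coefficient, and carries the strongly convex smooth penalty $\tfrac{\theta}{2}\|u\|_2^2$. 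The identity coefficient on the last, unconstrained block is exactly the device \cite{xu2020us} prescribes to restore convergence of nonconvex ADMM with a constrained final block. Under these conditions the inner $3$‑block scheme (Algorithm~\ref{alg:inner-iteration}) is well defined and, by Proposition~\ref{prop:inner_convergence}, returns an $\epsilon$‑stationary point of \eqref{eq:compact_augmented_lagrangian} in $O(1/\epsilon^{2})$ iterations. The nonconvex‑to‑near‑convex reduction needed for that inner rate is furnished by Lemma~\ref{thm:Hessian} and Proposition~\ref{prop:sufficient_descent}: the ramping blocks are convex, and the uniform curvature bound on the flow maps $\Phi^t$ lets the temperature blocks satisfy the sufficient‑descent inequality the inner analysis requires.

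Next I would set up the outer accounting for a target accuracy $\epsilon$. The outer loop performs the dual ascent $\hat w^{k+1}=w^k+\theta^k u^k$, projects onto $[\underline w,\overline w]$, and geometrically inflates $\theta^k$ by $\gamma$ whenever the slack fails to contract by the factor $\omega$. Two facts drive the rate. \textbf{(i)} The projection keeps $\|w^k\|$ uniformly bounded, which bounds the Lagrangian gap and the accumulated dual error. \textbf{(ii)} Geometric growth of $\theta^k$ on every non‑contracting step forces the infeasibility surrogate $\|u^k\|_2$ to $O(\epsilon)$ after $O(1/\epsilon^{2})$ outer iterations in the general case. Composing this outer count with the per‑outer inner cost $O(1/\epsilon^{2})$ of Proposition~\ref{prop:inner_convergence} gives the total $O(1/\epsilon^{4})$ bound. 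If instead the unprojected dual sequence is bounded, $\|\hat w^k\|\le W$, the projection is inactive and the penalty need not grow as aggressively, so $\|u^k\|_2=O(\epsilon)$ is reached in $O(1/\epsilon)$ outer iterations; composing again with the $O(1/\epsilon^{2})$ inner cost yields $O(1/\epsilon^{3})$.

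I would then pass from an $\epsilon$‑stationary point of the relaxation \eqref{eq:compact_augmented_lagrangian} back to one of \eqref{eq:compact_original_separable_problem}. At termination the outer while‑loop certifies $\|Ax^{(k)}+By^{(k)}\|_2\le\epsilon$, i.e.\ the consensus constraint \eqref{eq:compact_original_problem_consensus} holds up to $\epsilon$; combined with the inner $\epsilon$‑stationarity of the $x$‑ and $y$‑blocks and the boundedness of the multipliers, the KKT residual of \eqref{eq:compact_original_separable_problem} is $O(\epsilon)$. This is the same relaxation‑to‑original passage used in \cite{sun_two-level_2023}, valid here because relaxing $u=0$ and penalizing it is precisely the reformulation that scheme is built upon.

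The main obstacle is securing facts \textbf{(i)}--\textbf{(ii)} for the two \emph{nonconvex} blocks $\mathrm{AC}_t$ and $\mathrm{Tmp}_{ij}$: the two‑level rate of \cite{sun_two-level_2023} rests on a sufficient‑descent property of the second block, and the quartic temperature dynamics a priori threaten it. The resolution is to invoke the uniform Hessian bound of Lemma~\ref{thm:Hessian} so that Proposition~\ref{prop:sufficient_descent} delivers the required descent with the explicit constant $C_{\Delta}$, and then to verify that every constant entering the outer contraction depends only on $T^{\max}$, $K_1$, $K_4$, $r$, and $\Delta$, hence is uniform over lines and iterations. Carrying out this uniform accounting---so that the abstract hypotheses of \cite{sun_two-level_2023} hold with line‑independent constants---is the delicate part of the full proof.
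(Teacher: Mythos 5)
Your overall strategy is the same as the paper's: the paper's proof is a one-line citation of Theorem 2 in \cite{sun_two-level_2021} and Theorems 1--2 in \cite{sun_two-level_2023}, and your plan of verifying the structural hypotheses (unconstrained slack block with the strongly convex penalty $\tfrac{\theta}{2}\norm{u}_2^2$, full-row-rank selection matrices, inner descent via Lemma~\ref{thm:Hessian} and Proposition~\ref{prop:sufficient_descent}, inner rate via Proposition~\ref{prop:inner_convergence}) and then invoking that framework is exactly the intended reduction.

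However, your outer accounting in fact \textbf{(ii)} misattributes where the exponents come from, and carried out rigorously it would fail. With the penalty inflated geometrically ($\theta^{k+1}=\gamma\theta^{k}$) on every non-contracting step and $\norm{u^{k}}_2\le\omega\norm{u^{k-1}}_2$ otherwise, the number of \emph{outer} iterations needed to reach $\norm{u^{k}}_2\le\epsilon$ is $O(\log(1/\epsilon))$, not $O(1/\epsilon^{2})$ (general case) or $O(1/\epsilon)$ (bounded-dual case); your claim is inconsistent with the geometric penalty schedule you describe in the same paragraph. The polynomial factors actually enter through the size of the \emph{terminal penalty}, which multiplies the inner complexity: the $O(1/\epsilon^{2})$ bound of Proposition~\ref{prop:inner_convergence} hides a constant proportional to $\rho^{(k)}=2\theta^{k}$ (the nonconvex ADMM rate scales linearly with the penalty), so the per-outer inner cost is $O(\theta^{k}/\epsilon^{2})$, not a uniform $O(1/\epsilon^{2})$. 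In the general case, only the projected $w^{k}$ is bounded and one can only guarantee $\norm{u^{k}}_2=O(1/\sqrt{\theta^{k}})$ from the augmented-Lagrangian value bounds, forcing $\theta^{K}=O(1/\epsilon^{2})$ and a final inner loop of cost $O(\theta^{K}/\epsilon^{2})=O(1/\epsilon^{4})$; when the unprojected dual $\hat w^{k}$ is bounded, the approximate stationarity relation $w^{k}+\theta^{k}u^{k}+v^{k}=0$ (equivalently $\hat w^{k+1}=-v^{k}$) gives the stronger bound $\norm{u^{k}}_2=O(1/\theta^{k})$, so $\theta^{K}=O(1/\epsilon)$ suffices and the total drops to $O(1/\epsilon^{3})$. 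Your composition ``outer count $\times$ uniform inner cost'' reproduces the right products only by coincidence; the correct decomposition is ``logarithmically many outer loops, with inner cost driven by the growing penalty,'' which is the argument in \cite{sun_two-level_2023} that the paper defers to.
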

\begin{proof}
    We refer the reader to Theorem 2 of \cite{sun_two-level_2021} and Theorems 1-2 in \cite{sun_two-level_2023}.
\end{proof}
\section{Computations}\label{sec:computations}
\noindent
\begin{table*}[t]
\centering
\caption{Convergence behavior of ADMM in different ratings setups.}
\label{tab:convergence_stats}
\begin{tabular}{@{}l *{4}{S[table-format=4.2]@{\hskip 1mm}S[table-format=4.2]@{\hskip 1mm}S[table-format=4.2]}@{}}
\toprule
 & \multicolumn{3}{c}{Num. Outer Iter.} 
 & \multicolumn{3}{c}{Num. Inner Iter.} 
 & \multicolumn{3}{c}{Total time (s)} 
 & \multicolumn{3}{c}{$\norm{Ax + By}_\infty$ ($\cdot 10^{-3})$} \\
\cmidrule(lr){2-4} \cmidrule(lr){5-7} \cmidrule(lr){8-10} \cmidrule(lr){11-13}
Setup & {Min} & {Max} & {Mean} 
      & {Min} & {Max} & {Mean} 
      & {Min} & {Max} & {Mean} 
      & {Min} & {Max} & {Mean} \\
\midrule
DLR-SS & 1.00 & 12.00 & 8.13
     & 12.00 & 121.00 & 18.36
     & 59.82 & 1480.40 & 221.62
     & 0.00 & 6.66 &  \bfseries 2.52 \\
DLR-Trans & 1.00 & 13.00 & 8.14
    & 13.00 & 123.00 & 18.90
    & 66.92 & 1494.94 & 229.05
    & 0.00 & 12.60 & 2.99 \\
AAR & 1.00 & 13.00 & \bfseries 8.03
    & 1.00 & 125.00 & \bfseries 17.63
    & 49.73 & 1784.90 & \bfseries 216.47
    & 0.00 & 6.77 & \bfseries 2.52 \\
SLR & 1.00 & 14.00 & 8.14
    & 1.00 & 177.00 & 18.69
    & 45.49 & 1947.04 & 221.43
    & 0.00 & 8.31 & 2.56 \\
\bottomrule
\end{tabular}
\vspace{-1,5em}
\end{table*}

We now present the computational results, including the convergence behavior of our algorithm and comparisons between different rating schemes.

\subsection{Experimental Setup}
We compare four transmission rating schemes:

\begin{enumerate}
    \item \textit{DLR-Trans}: We solve the transient-state model given by~\eqref{eq:compact_original_separable_problem}, where the temperature variations are computed given the temperature model $\mathrm{Tmp}^{\mathrm{Trans}}$ (Eq.~\eqref{eq:reform}).
    \item \textit{DLR-SS}: We solve the model~\eqref{eq:compact_original_separable_problem}, but the temperature variations are given by the steady-state model~\eqref{eq:ss_model}. In this formulation, current limits are set a priori in the AC model, using Eq.~\eqref{eq:model:current_limit}. Therefore, the ADMM decomposition only enforces ramping constraints.
    \item \textit{AAR} (Ambient Adjusted Ratings): We solve~\eqref{eq:compact_original_separable_problem}, with steady-state dynamics, similarly to~\eqref{eq:ss_model}. However, the current ratings are computed using conservative values of wind speed and angles $(v_w = 0.6 m/s, \phi = \frac{\pi}{2})$, but real data for ambient temperature. 
    \item \textit{SLR}: We solve~\eqref{eq:compact_original_separable_problem}, but we set a priori current limits in the AC model corresponding to conservative weather conditions: wind data is the same as in AAR, while $T_a = 40^\circ C$ in the summer and $20^\circ C$ in the winter.
\end{enumerate}
All four models are solved with Algorithm \ref{alg:outer-iteration}. For \textit{DLR-Trans}, the algorithm enforces consensus on both current variables $\iota$ and power injections $p^G$. For all other setups, consensus is only over variables $p^G$, as current limits are already imposed in the ACOPF models (see Proposition \ref{prop:ss_restriction}). 

\subsection{Data used} \label{sec:data_used}
We evaluate our algorithm on the ERCOT 2000-bus grid from the TAMU dataset \cite{birchfield2016grid}, augmented with generator data from EIA Form-860 \cite{noauthor_annual_nodate}. Zonal demand time series (5-minute resolution) are sourced from the Grid Status API \cite{noauthor_grid_nodate}. NOAA HRRR forecast data \cite{noaa_hrrr}, accessed via Herbie \cite{blaylock2024herbie}, provides 15-minute weather inputs. Wind speeds at 80m and 10m are interpolated to 40m; $T_a$ and $K_{\text{angle}}$ values are sampled conservatively along transmission lines. Wind and solar capacity factors are computed using GE 2.75-120 curves \cite{noauthor_ge_nodate} and pvlib \cite{holmgren2018pvlib}, respectively. Thermal ramp rates are 20\% per 5 minutes; renewables are not ramp-limited. Renewable units are assigned a power factor of $|Q^{\max}|/P^{\max} \approx 0.329$. Unit commitment is excluded and $P^{\min} = 0$. Moreover, $T^{\max} = 100^\circ C$. 

\subsection{Implementation details} \label{sec:implementation_details}
All the optimization code is written in Julia 1.11.5 and runs on the MIT Engaging HPC system on 16 cores of an AMD EPYC 9474F 48-Core Processor. Nonlinear constrained problems are modeled in JuMP~\cite{dunning_jump_2017} and solved with IPOPT~\cite{wachter_implementation_2006} using the MA57 linear solver~\cite{duff_ma57---code_2004}.

All variables in the temperature models are rescaled so that their range is between $10^{-1}$ and $10^4$. Consensus in power injection variables is enforced in per-unit. Consensus in current magnitude squared is enforced in the unit of $(kA)^2$.

Let $d$ be the size of the vectors $Ax^k$. Typically $d = |\sT| \cdot (|\sG| + |\sE|)$. We set the parameters $\theta = 100$, $\gamma = 6.0$, $\omega = 0.6$, $\Delta = 300$ seconds, $|\sT| = 12$ for a $1$-hour horizon. 



Throughout the experiments, we use the termination criterion $\epsilon = 10^{-4}$. We exit the inner loop (Algorithm \ref{alg:inner-iteration}) whenever the algorithm returns an iterate satisfying $\norm{Ax^{k} + By^{k} + u^{k}}_2 \leq \max(\frac{1}{k}\sqrt{\frac{d}{\theta^k}}, \epsilon)$.
The outer loop terminates when the algorithm returns a point satisfying $\norm{Ax^{k} + By^{k}}_2 \leq \sqrt{d} \epsilon$. 

\subsection{Empirical convergence of the ADMM algorithm} \label{sec:empirical_convergence}

\begin{figure*}[t]
    \centering
    \hfill
     \subfloat[]{
     \label{fig:consensus_gap_consensus}
        \includegraphics[width=0.25\linewidth]{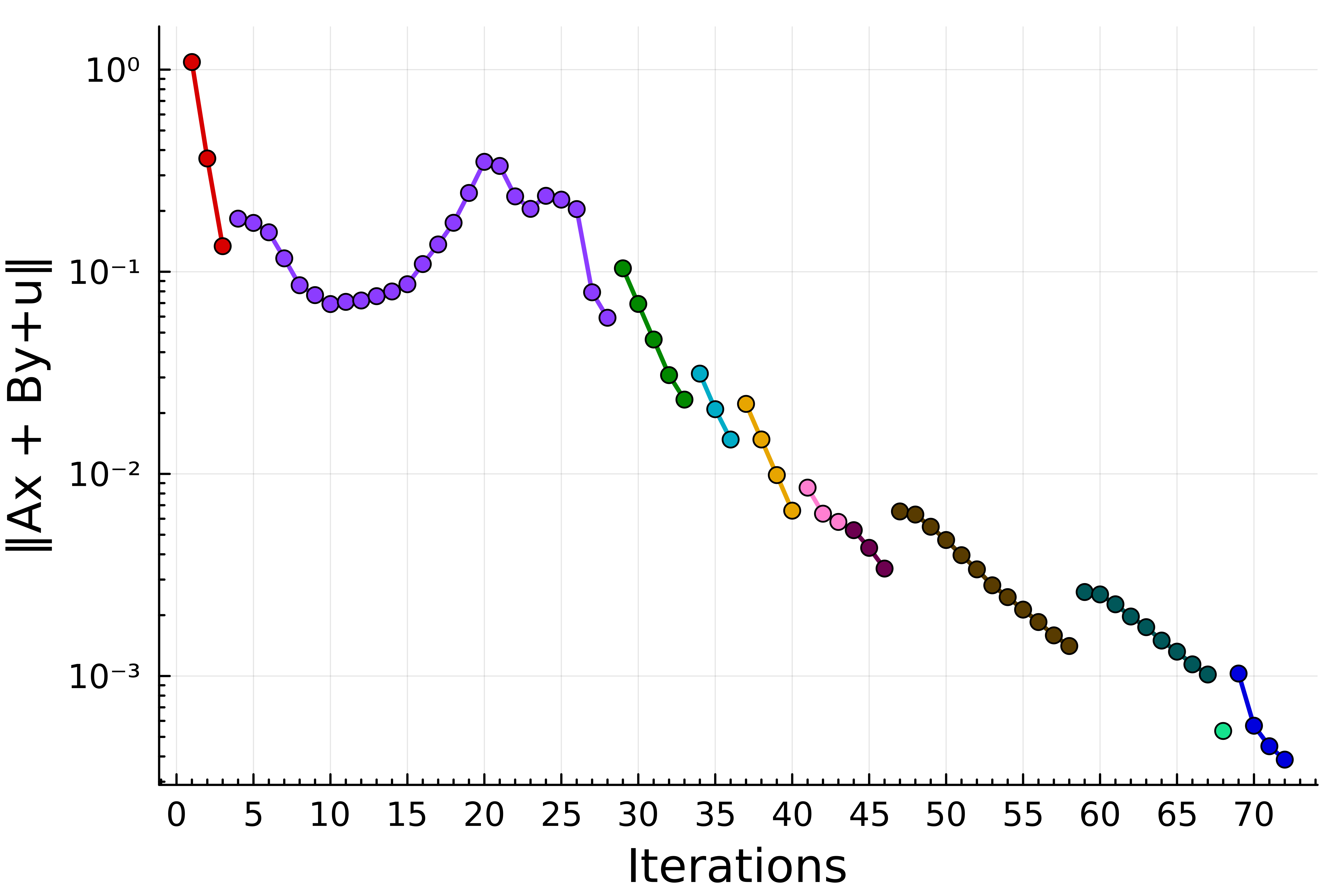}
        }\hfill
        \subfloat[]{%
             \label{fig:consensus_gap_gap}
        \includegraphics[width=0.25\linewidth]{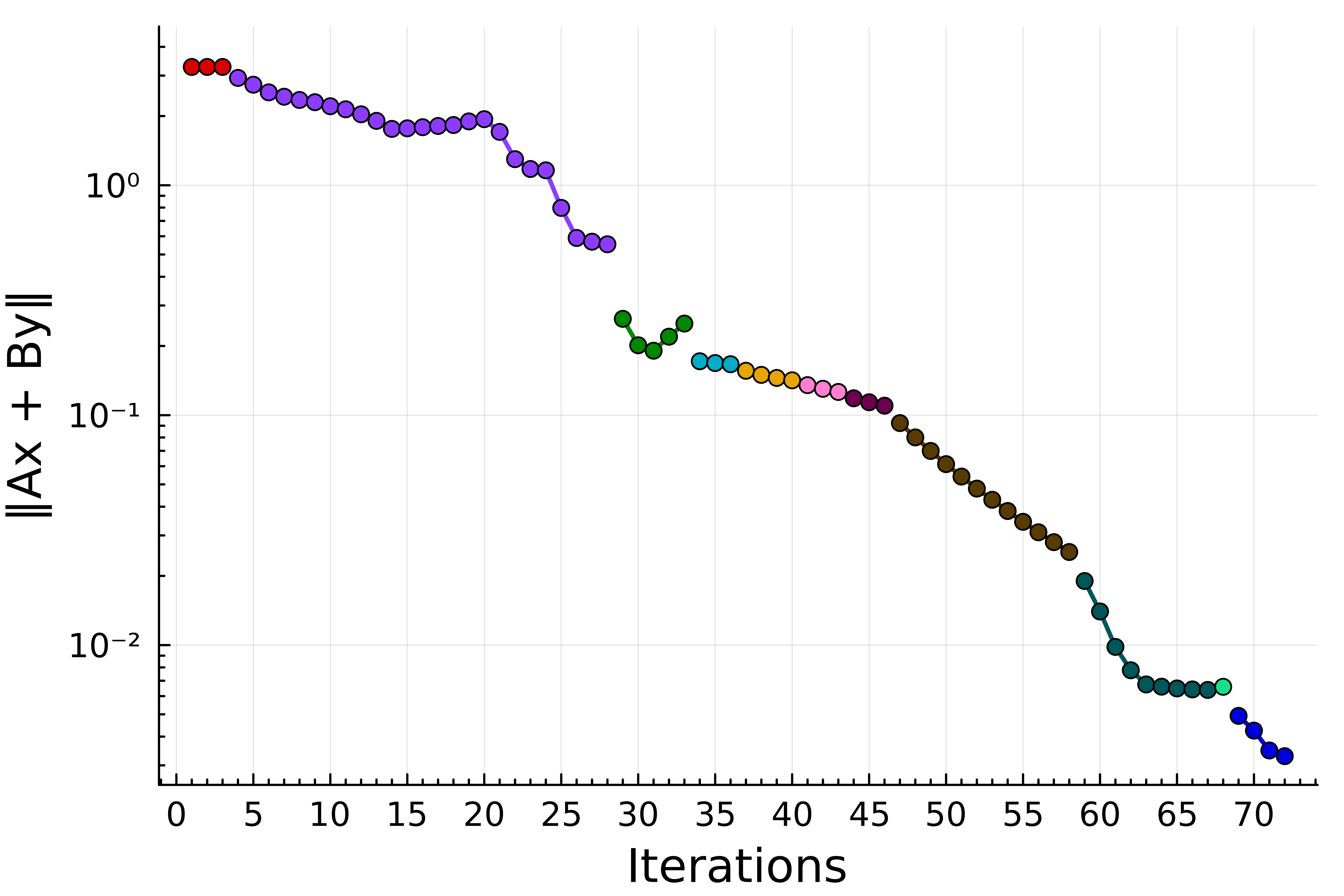}
        }\hfill
        \vspace{-0.5em}
    \caption{Convergence behavior of the ADMM algorithm showing (a) inner iteration consensus and (b) primal feasibility gap. Colors denote outer iterations.}
    \label{fig:convergence_and_primal_gap}
    \vspace{-1.5em}
\end{figure*}

Table~\ref{tab:convergence_stats} reports convergence results over a week of 5-minute operations (July 1–14, 2024), including the number of outer/inner iterations, total solve time, and final primal feasibility gap. The bi-level ADMM algorithm converged across all configurations and hours, finding a primal feasible solution within the prescribed tolerance.

\begin{figure*}[b]
\captionsetup{farskip=0pt,nearskip=0pt}
  \centering
  \subfloat[]{%
    \includegraphics[width=0.3\linewidth]{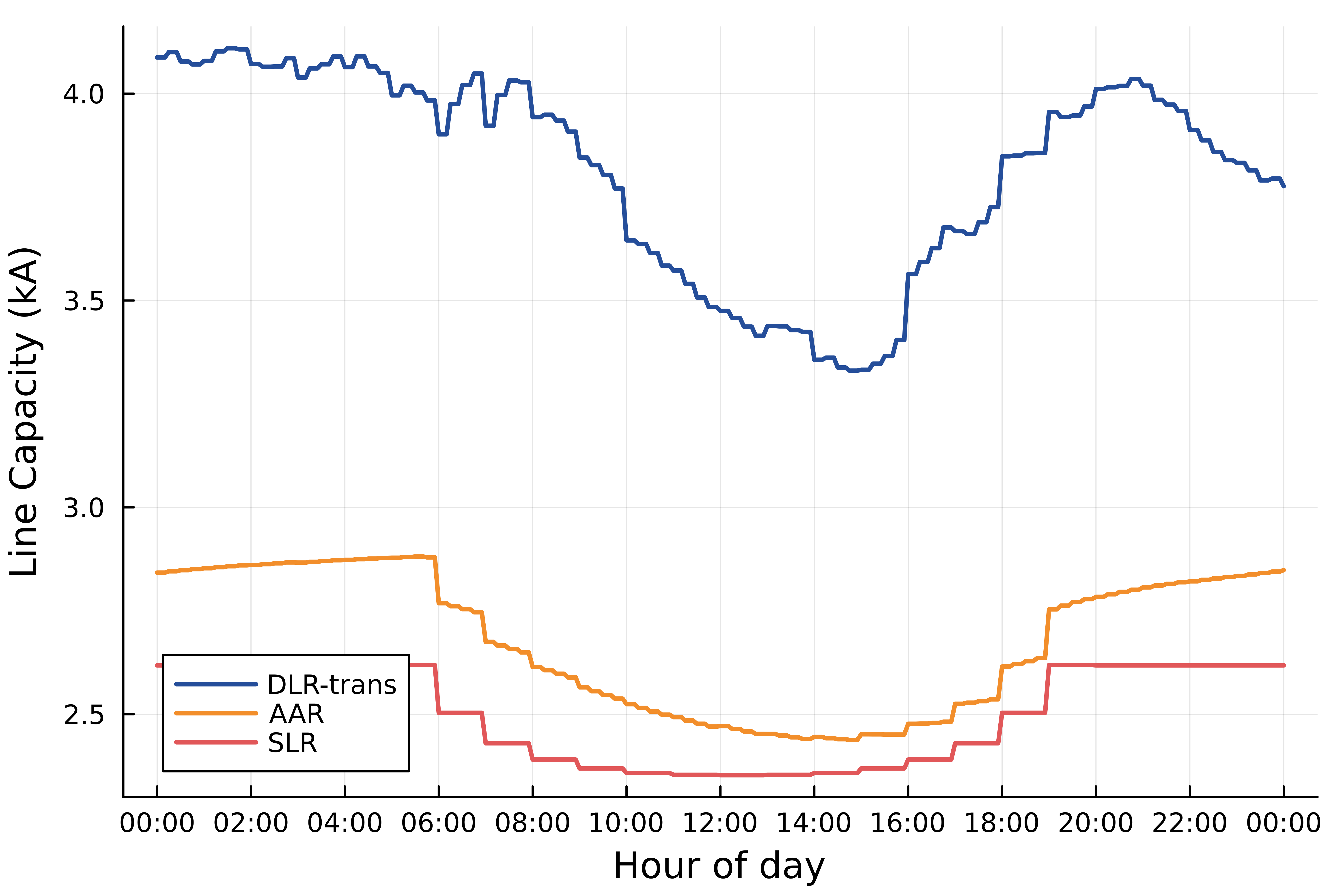}%
    \label{fig:linecap}%
  } \hfill
  \subfloat[]{%
    \includegraphics[width=0.3\linewidth]{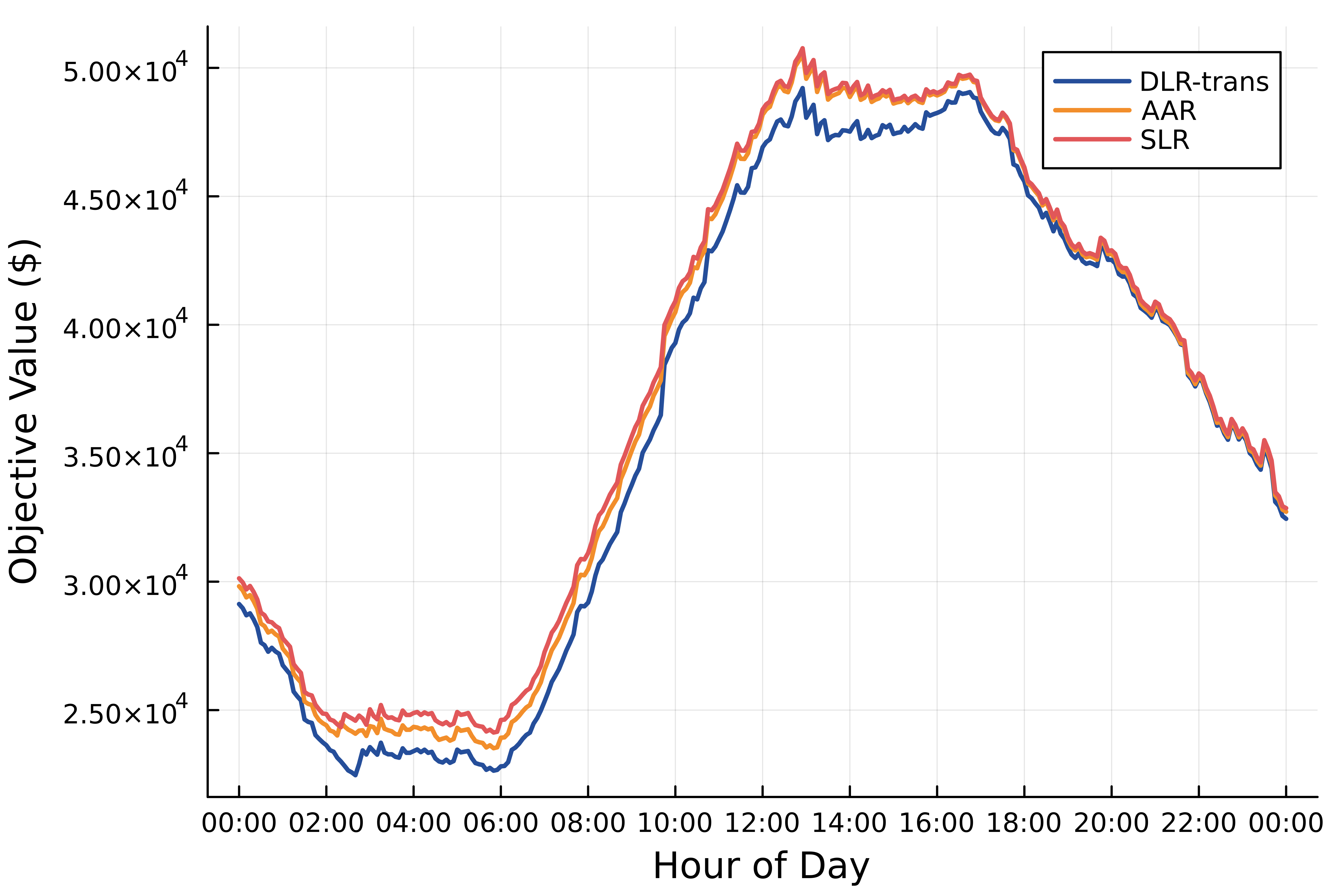}%
    \label{fig:obj}%
  }\hfill
  \subfloat[]{%
    \includegraphics[width=0.3\linewidth]{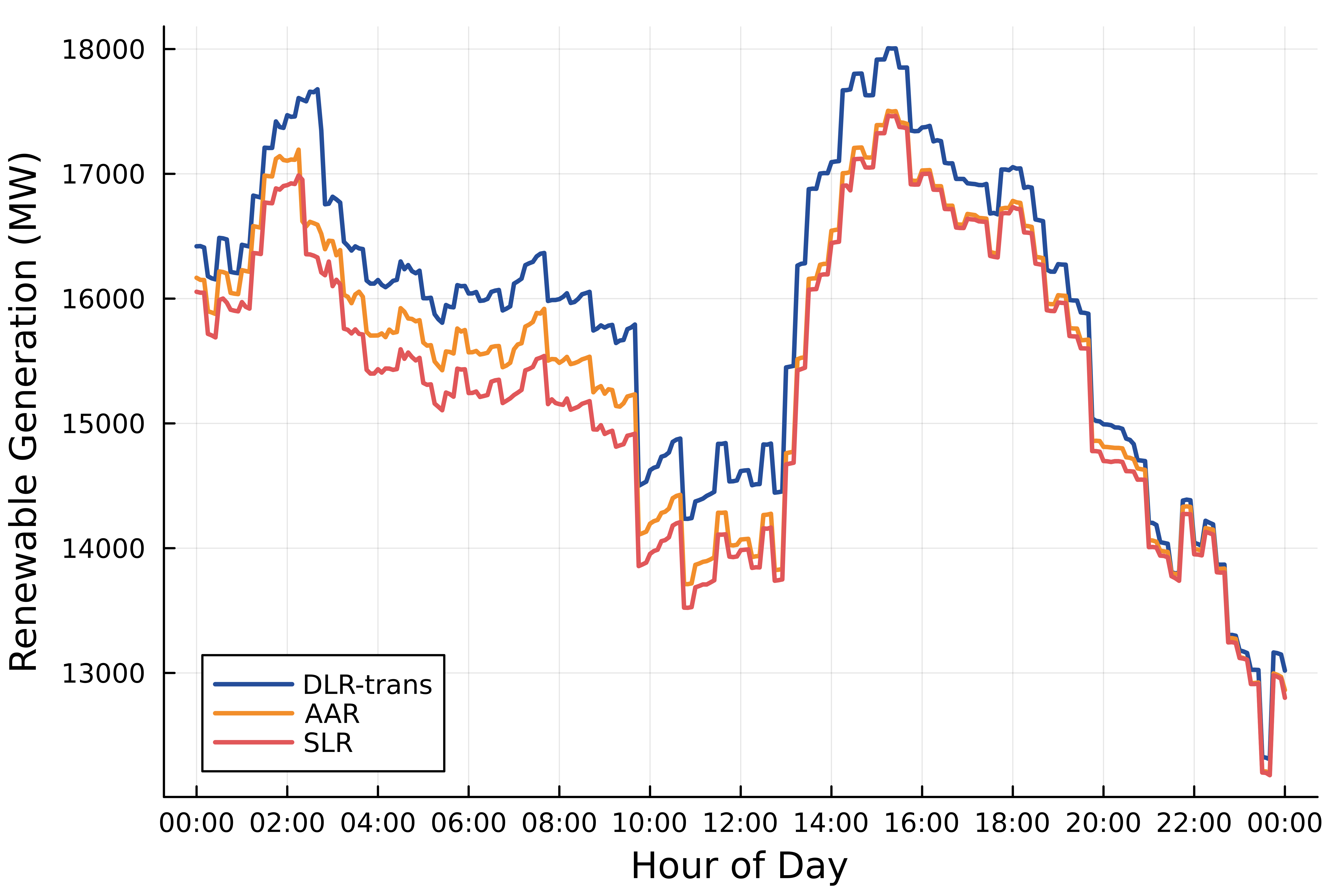}%
    \label{fig:ren_gen}%
  }
  \vspace{-0.5em}
  \caption{Line capacity (a), system cost (b), and (c) renewable output of all rating schemes over one summer day of operations.}
  \vspace{-1em}
  \label{fig:performance_one_day}
\end{figure*}
Our preliminary experiments show that Algorithm \ref{alg:outer-iteration} converges faster and to better solutions when its initial iterates are already nearly feasible for Problem~\eqref{eq:compact_original_separable_problem}. Therefore, it is detrimental for overall performance and speed to remove all line limits \eqref{eq:Imax} constraints when solving \textit{DLR-Trans}. Instead, we identify candidate lines via a screening procedure: we first solve $|\sT|$ instances of the ACOPF using steady-state line limits (Eq.~\eqref{eq:model:current_limit}) on all lines. Next, we identify lines whose initial temperature is below $90^\circ\text{C}$ and that reach $100^\circ\text{C}$ during some period $t \in \sT$. For these lines only, we relax the current limit and apply the transient-state temperature model within our ADMM framework. For all other lines, we keep the steady-state current limits given by Eq.~\eqref{eq:model:current_limit}. This applies the transient model only to lines that effectively provide additional capacity. 
It is worth noting that for all models, almost all of the computational time is spent solving the ACOPF subproblems. We observe that all models using the steady-state computations of temperature exhibit similar computational complexity. One may think that the \textit{SLR} models would be easier to solve, because they are conceptually simpler. However, for \textit{AAR}, \textit{SLR}, and \textit{DLR-SS}, the current limits are precomputed via~\eqref{eq:Imax}. Once the weather data has been gathered, all models share the same optimization formulation, making \textit{DLR} or \textit{AAR} not harder to solve. The average runtimes of \textit{AAR}, \textit{SLR}, and \textit{DLR-SS} are similar, but \textit{AAR} and \textit{SLR} have a higher maximal computational time. In heavily loaded conditions, the tighter current limits make solving ACOPF subproblems harder. 
The computational performance of \textit{DLR-Trans} is also comparable. This is because only a small subset of lines is evaluated dynamically. 

\begin{table}[t]
  \caption{Performance of rating methods, relative to SLR baseline}
  \label{tab:methods}
  \centering
  \begin{tabular*}{\columnwidth}{@{\extracolsep{\fill}}l@{\hskip 1pt}cc@{\hskip 1pt}cc@{\hskip 1pt}cc}
    \toprule
    \multirow{2}{*}{\textbf{Method}} &
    \multicolumn{2}{c}{\textbf{Capacity\ (\%)}} &
    \multicolumn{2}{c}{\textbf{Cost\ (\%)}} &
    \multicolumn{2}{c}{\textbf{RE Gen.\ (\%)}} \\
    \cmidrule{2-7}
     & Summer & Winter & Summer & Winter & Summer & Winter \\
    \midrule
    \textit{SLR}     & 0.00 & 0.00 & 0.00 & 0.00 & 0.00 & 0.00 \\
    \textit{AAR}     & 8.72 & 6.24 & -0.87 & -0.72 & 1.24 & 0.51 \\
    \textit{DLR-SS}  & \textbf{43.30} & \textbf{52.10} & -2.11 & \textbf{-1.81} & 3.10 & \textbf{1.32} \\
    \textit{DLR-Trans} & \textbf{43.30} & \textbf{52.10} & \textbf{-2.12} & \textbf{-1.81} & \textbf{3.11} & \textbf{1.32} \\
    \bottomrule
  \end{tabular*}
  \vspace{-1em}
\end{table}


Figure~\ref{fig:consensus_gap_consensus} shows that consensus decreases by several orders of magnitude per iteration, approximating a stationary point to~\eqref{eq:compact_augmented_lagrangian}. Subsequent iterations enforce a stronger consensus to achieve feasibility.
In Figure \ref{fig:consensus_gap_gap}, we observe that in the first iterations, the feasibility gap remains relatively large. As the number of outer iterations and the penalty parameter increase, the primal gap reduces. In the end, we see a sharp reduction in the gap, triggering termination of the algorithm.

\subsection{Congestion Alleviation of DLR} \label{subsec:congestion_alleviation}


Table~\ref{tab:methods} reports relative variations of average current capacity across all lines, system cost, and total renewable generation, measured against the \textit{SLR} baseline. Current capacity is computed using the steady-state maximum current (Eq.~\eqref{eq:Imax}) and is therefore identical for both \textit{DLR-SS} and \textit{DLR-Trans}. Summer represents runs from July 1 to July 14, 2024, whereas winter represents runs from January 1 to January 14, 2024

We observe a significant average current capacity increase in both the summer and winter DLR runs. The increase is even higher in the winter runs, when the wind is stronger. These variations in current capacity translate into cost reduction in both winter and summer days. On average, DLR reduces system cost by around $2\%$, and boosts renewable generation by $1{-}3\%$, thereby improving upon \textit{AAR} $4$ times for capacity and $2.5$ times for cost reduction. In the summer, during high congestion scenarios, \textit{DLR-Trans} shows marginal improvement over \textit{DLR-SS}. In winter, when congestion is low, \textit{DLR-SS} and \textit{DLR-Trans} performance is comparable. For the most part, system-wide cost benefits are captured by the steady-state model.

Figure~\ref{fig:performance_one_day} shows current capacity, system cost, and renewable generation for all models on July 4, 2024. The benefits of DLR methods are visible, with consistent improvements over both \textit{AAR} and \textit{SLR}. Because of its overall similarity to \textit{DLR-Trans}, \textit{DLR-SS} was omitted.  

\subsection{\textit{DLR-SS} vs \textit{DLR-Trans} in transient regimes} \label{sec:ss_dyn}
Despite their similar cost performance, \textit{DLR-SS} and \textit{DLR-Trans} yield different solutions.
In Figure~\ref{fig:dyn_current_increase}, we focus on July 6, 2024, highlighting lines that are congested under \textit{DLR-SS} and show significantly different current dispatch under \textit{DLR-Trans}. On these lines, \textit{DLR-Trans} enables up to $15$\% more current during certain periods (e.g. line 1). It can be observed that increasing the current on a congested line during a time period sometimes reduces the current supplied on other congested lines, effectively adding headroom to these lines. 

This demonstrates that the transient state framework provides operators with greater flexibility to respond to rapid and localized weather changes. These advantages last for $5$ to $20$ minutes, before the transient temperatures converge to their steady-state.

Transient conditions usually involve only a few lines at any given moment, so the benefits of \textit{DLR-Trans} are both brief and highly localized. Across the entire network, $1.5\%$ of the lines that are congested under \textit{DLR-SS} gain extra capacity under \textit{DLR-Trans}, and that happens in $5\%$ of the time intervals. The additional transient capacity provides operators with a powerful tool for managing short-term stress on the grid. Even brief, localized flexibility can be critical in maintaining reliability under rapidly changing conditions.

\begin{figure}[h]
    \centering
    \includegraphics[width=0.7\linewidth]{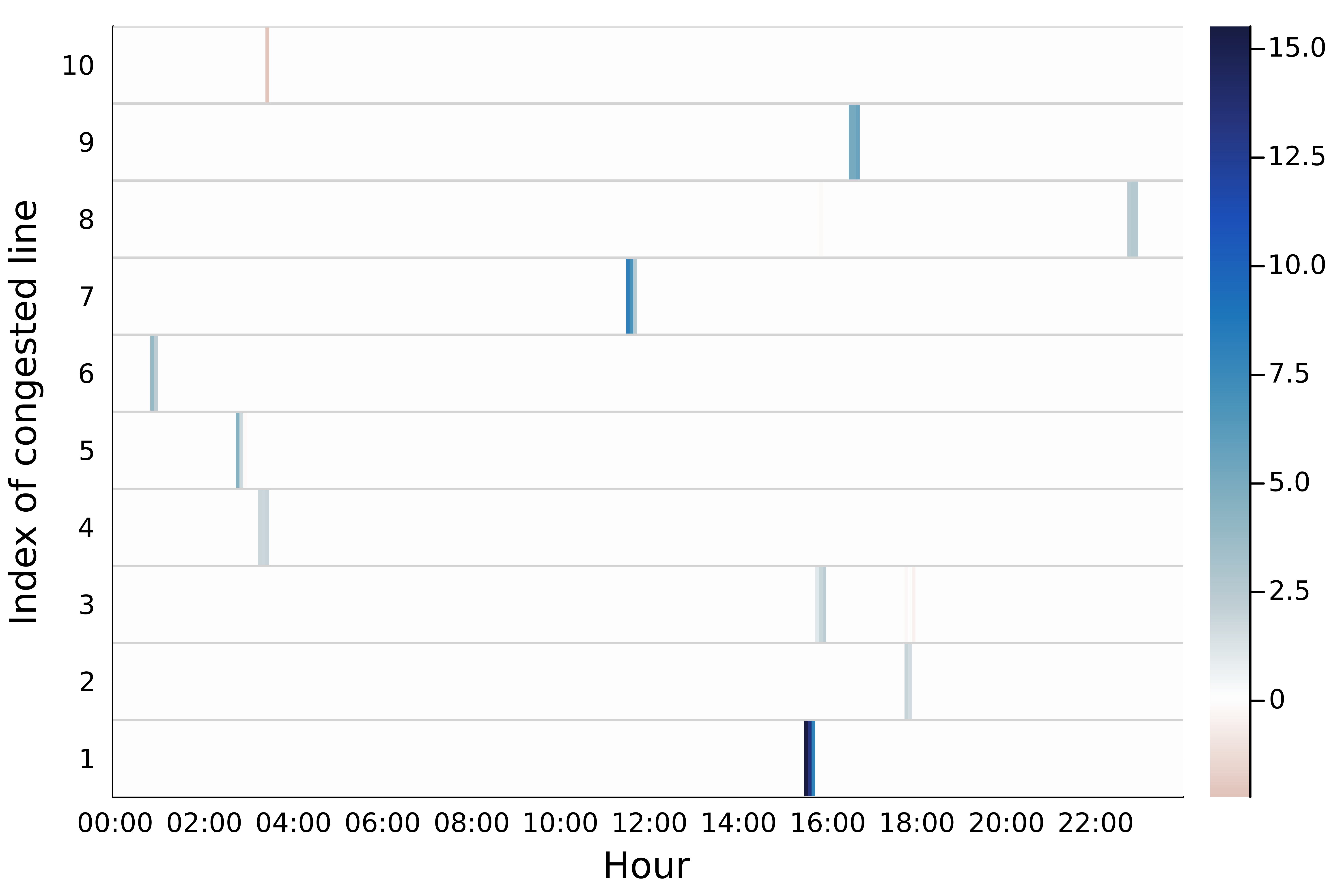}
    \vspace{-1em}
    \caption{Current difference (in \%) between \textit{DLR-Trans} and \textit{DLR-SS}. The selected lines are those that are congested in \textit{DLR-SS} and in which the current supplied is different in \textit{DLR-Trans}.}
    \label{fig:dyn_current_increase}
    \vspace{-1em}
\end{figure}
\section{Conclusions}
\noindent
In this paper, the transient-state DLR-ACOPF model was introduced and solved using a bi-level ADMM algorithm. 
The model explicitly incorporates the heat equation of transmission lines and leverages a space-time decomposition approach. Through large-scale computational experiments, we demonstrated that using DLR at the grid level can reduce generation costs by over 2\% under typical summer operating conditions, compared to the baseline SLR scenario. Furthermore, the transient-state temperature computations offer substantial headroom benefits with minimal additional computational cost relative to the steady-state approach.


\appendices{}

\section{Proof of Theorem \ref{thm:tempsol}}\label{app:proof_tempsol}

\begin{proof}
    The temperature dynamics~\eqref{eq:temp-dyn-linear-qc} can be written in a separable form as
        \begin{align}
        -K_4 {d\tau} = \frac{dT}{T^4 + (K_1/K_4) T - (K_0/K_4)} =:\frac{dT}{P(T)}. \label{eq:dTdt-separable}
    \end{align}
    Define $b=K_1/K_4, c=K_0/K_4$. Using the discriminant of $P(T)$ given as $\Delta = -256c^3 - 27b^4$, and $c<0$, we know that $f(T)=0$ has two real roots and two complex roots.
    Moreover, the two real roots are given by the intersection of the quartic curve $T^4$ and the line $-bT+c$. Thus, one of two real roots is positive, denoted as $s_1$, and the other is negative, denoted as $-s_2$. Factorizing $f$, we get that:
        $P(T) = (T - s_1)(T + s_2)(T^2 - pT + q)$
     with 
        $p = s_2 - s_1$ and 
        $q = s_1 s_2 + (s_2 - s_1)^2 > 0.$

    Let us define $A, B, C, D$ such that: $\frac{1}{P(T)} = \frac{A}{T-s_1} + \frac{C}{T+s_2} + \frac{BT+D}{T^2 - pT + q}.$ Standard algebra gives us: $ A = \frac{3s_2^2 - 2 s_1 s_2 + s_1^2}{(s_1 + s_2)g(s_1, s_2)}$, $C = \frac{-(3s_1^2 - 2 s_1 s_2 + s_2^2)}{(s_1 + s_2)g(s_1, s_2)}$, $B = -A - C$, $D =  \frac{s_1^2 - 4s_1s_2 + s_2^2}{g(s_1, s_2)}$, with $g(s_1, s_2) := (3s_2^2 - 2 s_1 s_2 + s_1^2)(3s_1^2 - 2 s_1 s_2 + s_2^2)$.
    We integrate between $T_0$ and $T$ to obtain the claimed result. \end{proof}

\allowdisplaybreaks
\section{Proofs of section \ref{sec:convergence}}
\subsection{Proof of Lemma~\ref{thm:Hessian}}\label{app:proof_flow_map_1}
\begin{proof}
We first establish a single‑period estimate.

\begin{lemma}\label{lem:single-period}
With \(\beta\) and \(M_{\Delta}\) as above,
\[
\bigl\|\nabla^{2}f_{W, \tau}(T_{0},\iota_{1})\bigr\|_{\mathrm{op}}
\;\le\;
M_{\tau}\qquad(0\le\tau\le\Delta).
\]
\end{lemma}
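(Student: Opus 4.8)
The plan is to prove the single-period bound by the classical method of variational (sensitivity) equations for the scalar flow of \eqref{eq:temp-dyn-linear-qc}. Writing $T(\tau) = f_{W,\tau}(T_0,\iota)$ for the solution of $\dot T = F(T) + r\iota$ with $F(T) := K_0' - K_1 T - K_4 T^4$, I would first record the drift derivatives $F'(T) = -K_1 - 4K_4 T^3$ and $F''(T) = -12 K_4 T^2$. Under the standing hypothesis $0 \le T(\tau) \le T^{\max}$, these are controlled by $-\underline\kappa \le F'(T) \le -K_1 < 0$ and $|F''(T)| \le \beta$, with $\underline\kappa$ and $\beta$ as defined in Lemma~\ref{thm:Hessian}. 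These two sign/magnitude facts are the only properties of the dynamics the argument will use.

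Next I would differentiate the flow with respect to the inputs $\xi = (T_0, \iota)$. The first-order sensitivities $a := \partial T/\partial T_0$ and $b := \partial T/\partial\iota$ solve the linear variational equations $\dot a = F'(T)\,a$ with $a(0) = 1$, and $\dot b = F'(T)\,b + r$ with $b(0) = 0$. Since $F' \le -K_1$, the integrating factor $\exp\!\bigl(\int_s^\tau F'(T(u))\,du\bigr)$ is bounded above by $e^{-K_1(\tau-s)}$, which yields $0 < a(\tau) \le G_\tau = e^{-K_1\tau}$ and $0 \le b(\tau) \le \tfrac{r}{K_1}\bigl(1 - e^{-K_1\tau}\bigr) \le \tfrac{r}{K_1}$. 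Consequently, for any unit direction $d$, the first directional derivative $\zeta := Df[d]$ obeys $|\zeta(\tau)| \le \sqrt{1 + r^2/K_1^2}$ by Cauchy--Schwarz, so $\zeta(\tau)^2 \le 1 + r^2/K_1^2$.

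For the Hessian, I would use that, by symmetry of $\nabla^2 f_{W,\tau}$, $\|\nabla^2 f_{W,\tau}\|_{\mathrm{op}} = \sup_{\|d\|=1} |D^2 f[d,d]|$, and that $\eta := D^2 f[d,d]$ solves the second variational equation $\dot\eta = F'(T)\,\eta + F''(T)\,\zeta^2$ with $\eta(0) = 0$. Variation of constants then gives $\eta(\tau) = \int_0^\tau \Psi(\tau,s)\,F''(T(s))\,\zeta(s)^2\,ds$ with $\Psi(\tau,s) = \exp\!\bigl(\int_s^\tau F'(T(u))\,du\bigr)$; bounding $|F''| \le \beta$, $\zeta^2 \le 1 + r^2/K_1^2$, and the integrating factor by its exponential decay yields $|\eta(\tau)| \le M_\tau$ after evaluating the resulting elementary integral $\int_0^\tau e^{-\kappa(\tau-s)}\,ds$. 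Taking the supremum over $\|d\| = 1$ gives the claim.

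The main obstacle, and the place requiring care, is the nonautonomous integrating factor $\Psi(\tau,s)$: because $F'(T(u))$ varies along the (unknown) trajectory, one cannot solve it explicitly and must instead sandwich it between $e^{-\underline\kappa(\tau-s)}$ and $e^{-K_1(\tau-s)}$ using the uniform bounds on $F'$. Matching the constant exactly to the stated $M_\tau = \tfrac{\beta}{\underline\kappa}\bigl(1 + r^2/K_1^2\bigr)\bigl(1 - e^{-\underline\kappa\tau}\bigr)$ requires choosing these bounds consistently, so that the decay rate used in the scalar integral and the curvature prefactor combine into the stated form; the remaining work is then the routine evaluation of that integral and the bookkeeping of the three Hessian entries through the sensitivity products $a^2$, $ab$, and $b^2$. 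I would also flag at the outset that the hypothesis $T(\tau) \le T^{\max}$ is precisely what licenses the uniform bounds on $F'$ and $F''$, so no separate forward-invariance argument for the temperature is needed here.
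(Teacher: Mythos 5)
Your proof takes essentially the same route as the paper's: the paper likewise differentiates the flow to obtain the first-order sensitivity ODE $\dot J = F'(T)\,J + (0,\,r)^{\top}$ with $J(0)=(1,0)^{\top}$, bounds $J$ by Gr\"onwall using $-\underline\kappa \le F'(T) \le -K_1$, writes the Hessian by variation of constants as $H(\tau)=\int_0^\tau \Psi(\tau,s)\,F''(T(s))\,J(s)J(s)^{\top}\,ds$, and exploits the rank-one structure $\bigl\|J J^{\top}\bigr\|_{\mathrm{op}}=\|J\|_2^2 \le 1+r^2/K_1^2$ together with $|F''|\le\beta$ --- exactly your directional-derivative argument in matrix form. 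The constant-matching subtlety you flag is genuine, but it is present in the paper's proof as well: the paper's final inequality lands on $\tfrac{\beta}{\underline\kappa}\bigl(1+r^2/K_1^2\bigr)\bigl(1-e^{-\underline\kappa\tau}\bigr)$ using the decay rate $\underline\kappa$ inside the integral, whereas the available upper bound on $\Psi(\tau,s)$ is only $e^{-K_1(\tau-s)}$, which yields $\tfrac{\beta}{K_1}\bigl(1+r^2/K_1^2\bigr)\bigl(1-e^{-K_1\tau}\bigr)$, a quantity larger than the stated $M_\tau$ since $x\mapsto(1-e^{-x\tau})/x$ is decreasing --- so your attempt is no weaker than, and structurally identical to, the paper's argument.
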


\begin{proof}[Proof of Lemma~\ref{lem:single-period}]
Denote \(J=\partial_{x_{0}}f_{W, \tau}\) and \(H=\partial^{2}_{x_{0}}f_{W, \tau}\).
The gradient satisfies the ODE:
\[
    \dot J= P'(T)J+
    \begin{pmatrix}0\\ r\end{pmatrix},
    \qquad
    J(0)=\begin{pmatrix}1\\ 0\end{pmatrix}
\]
Since \(-K_{1}\ge P'(T)\ge -\underline\kappa:=-K_{1}-4K_{4}(T^{\max})^3\),
Grönwall gives \(e^{-\underline\kappa\tau}\le J_{1}\le e^{-K_{1}\tau}\) and \(r\bigl(1-e^{-\tau\underline \kappa}\bigr)/K_{1}\le J_{2}\le  r\bigl(1-e^{-K_{1}\tau}\bigr)/K_{1}\).
The Hessian obeys
\[
    \dot H=P'(T)H+P''(T)JJ^{\!\top},\qquad H(0)=0_{2},
\]
and \(P''(T)\in[-\beta,0]\). By variation of the constant, we get
\(
    H(\tau)=\int_{0}^{\tau}\Psi(\tau, s)
         P''(T)J(s)J(s)^{\top}ds
\) with \(\Psi(s, \tau) := \exp(\int_s^\tau P(T(u))du)\). Since the matrix $JJ^{\top}$ is rank one, we have:
\(
    \|H(\tau)\|_{\mathrm{op}}
    \le\beta\int_{0}^{\tau} \Psi(s, \tau) \norm{J(s)}_2^2ds
    \le \frac{\beta}{\kappa}(1 + \frac{r^2}{K_1^2})(1 - e^{-\underline \kappa \tau}) \leq M_\tau
\)
\end{proof}
\noindent
Returning to the theorem, write the chain‑rule in first order for the function $\Phi^t(\iota_1, \ldots, \iota_T)$, for a given initial $T_0$ and $t \geq 2$:
\[
    g_t :=\partial_{\iota_1, \iota_{t-1}} \Phi^t = J_2 e_t + J_1 \partial_{\iota_1, \iota_{t-2}}\Phi^{t-1}
\]

As a result, we have that $\norm{\partial_{\iota_1, \iota_{t-1}} \Phi^t} \leq \frac{r}{K_1}$, for all $t$.

We now use the chain rule for the Hessian of $\Phi^{t}$. Define $h_t(\iota_{1:t-1}) = f_{W, \Delta}(T_{t-1}(\iota_{1:T-2}), \iota_{t-1})$. We have:

\[
H_t =J_{h_t}^{\top}\bigl(\nabla^{2}f_{W, \Delta}\bigr)J_{h_t} + J_1 H_{t-1},
\]
where $J_{h_t} := (g_{t-1}, e_t)^\top$. 

This gives $\norm{H_t}_{op} \leq e^{-K_1 \Delta}\norm{H_{t-1}}_{op} + (1 + \norm{g_{t-1}})^2M_{\Delta}$. Solving the recursion yields:
\begin{align}
    \norm{H_t}_{op} \leq M_\Delta + \frac{M_\Delta(1 + \frac{r}{K_1})^2}{1 - e^{-K_1 \Delta}}.
\end{align}
This concludes the proof.
\end{proof}
\subsection{Proof of Proposition \ref{prop:sufficient_descent}} \label{app:sufficient_descent}
\noindent
\textit{Proof.}
    Let $y$ denote a solution of $\mathrm{Tmp}_{ij}$, for any $ij \in \sE$.
    For every feasible point $y$, the Jacobian
    \(
    \bigl[\nabla\Phi^{t}(y)\bigr]_{t\in\mathfrak T}
    \)
    is lower–triangular with non‑zero diagonal, hence its columns are linearly
    independent. LICQ therefore holds \cite{solodov_constraint_2011}, so every local minimizer satisfies the Karush–Kuhn–Tucker (KKT) conditions.

    Let $g(y):=
    -B^\top \left(v^{(r-1)}-\rho^{(r)}\!\bigl(Ax^{(r)}+By^{(r)}+u^{(r-1)}\bigr)\right)$, and $\mathfrak T=\{t:\Phi^{t}(y)=T^{\max}\}$ and let $\lambda_t\ge 0$ ($t\in\mathfrak T$) be the dual multipliers.
    Stationarity gives
    \begin{equation}\label{eq:kkt}
    g(y)+\sum_{t\in\mathfrak T}\lambda_t\nabla\Phi^{t}(y)=0. 
    \end{equation}
    Because each $\Phi^{t}$ is $C_\Delta$‑smooth, for any $y$ satisfying $\Phi^{t}(y) = T^{\max}$, any other feasible $z$ satisfies
    $
    0\ge
    \Phi^{t}(z)-\Phi^{t}(y)
    \ge
    \nabla\Phi^{t}(y)^{\!\top}(z-y)-\tfrac{C_\Delta}{2}\|z-y\|_2^{2},
    \, \forall t\in\mathfrak T.$
    Multiplying by $\lambda_t$ and summing yields
    \begin{align*}
    g(y)^{\!\top}(z-y)
    &=-\!\sum_{t\in\mathfrak T}
      \lambda_t\nabla\Phi^{t}(y)^{\!\top}(z-y)
    \ge
    -\Lambda\,\frac{C_\Delta}{2}\|z-y\|_2^{2},  \notag
    \end{align*}
    since $\Lambda\geq\sum_{t\in\mathfrak T}\lambda_t$ by hypothesis. 
    Moreover, by first order optimality conditions of $\mathrm{Rmp}$, for any $y$ stationary point of $\mathrm{Rmp}$, any $z \in \mathrm{Rmp}$ we have $g(y)^\top (z-y) \geq 0 \geq - \Lambda \frac{C_{\Delta}}{2} \norm{z-y}_2^2$.

    As a result, for any $y$ stationary point of $\mathrm{Tmp} \times \mathrm{Rmp}$, for any $z$ of $\mathrm{Tmp} \times \mathrm{Rmp}$, we have:
    \begin{align}\label{eq:quad_descent}
        \bigl({-}{B^\top} v^{(r-1)} {-}B^\top\rho^{(r)}&(Ax^{(r)} {+} By^{(r)} {+} u^{(r-1)})\bigr)^\top (Bz{-}By) \notag \\ &\quad \geq- \Lambda \frac{C_{\Delta}}{2} \norm{Bz-By}_2^2.
    \end{align}

\noindent Using the identity \(\|a+b\|_2^{2}-\|a+c\|_2^{2}=2(a+c)^{\!\top}(b-c)+\|b-c\|_2^{2}\) with \(a=Ax^{(k,r)}+u^{(k,r-1)},\; b=By^{(k,r)},\; c=By^{(k,r-1)}\)
and inequality~\eqref{eq:quad_descent} we obtain
\begin{align*}\label{eq:L-descent}
    \mathcal L\bigl(\ldots,y^{(k,r-1)},v^{(k,r-1)}\bigr)
    -\mathcal L\bigl(\ldots,y^{(k,r)},v^{(k,r-1)}\bigr) \notag
    \\ \quad \ge
    \frac{\rho^{(k,r-1)}}{4}
    \bigl\|y^{(k,r)}-y^{(k,r-1)}\bigr\|_2^{2}. \qed
\end{align*}

\bibliographystyle{IEEEtran}
\bibliography{ieee_bib}

\end{document}